\documentclass[a4paper, reqno]{amsart} 
\usepackage{graphicx} 

\usepackage{amsmath,amsthm, amssymb, amsfonts}
\usepackage[margin=3cm]{geometry} 
\usepackage{url}

\hyphenpenalty=800\relax 
\exhyphenpenalty=800\relax 
\sloppy

\newtheorem{theorem}{Theorem}[section]
\newtheorem{corollary}[theorem]{Corollary}
\newtheorem{lemma}[theorem]{Lemma}
\newtheorem{proposition}[theorem]{Proposition}
\theoremstyle{definition}
\newtheorem{definition}[theorem]{Definition}

\newtheorem{problem}[]{Problem}

\theoremstyle{remark}
\newtheorem{remark}[theorem]{Remark}

\usepackage{float}
\restylefloat{table}
\restylefloat{figures}
\usepackage{multirow}
\usepackage{graphicx}
\usepackage{tabularx}
\usepackage{color}
\usepackage{here}
\usepackage{algpseudocode,algorithm}


  
\definecolor{mygray}{gray}{0.6}
\definecolor{lg}{gray}{0.88}
\definecolor{pakistan}{rgb}{0.0, 0.5, 0.0}
\definecolor{kimidori}{rgb}{0.85,0.93,0.3}
\definecolor{mypink}{rgb}{0.9, 0.0, 0.4}
\definecolor{yamabuki}{rgb}{1.0, 0.86, 0.0}
\definecolor{navy}{rgb}{0.0,0.0,0.7}
\definecolor{darkred}{rgb}{0.7,0.0,0.0}



\usepackage{tcolorbox}
\tcbuselibrary{raster,skins} 
\tcbuselibrary{breakable} 

\title[A structure theorem for rooted binary phylogenetic networks] 
{A structure theorem for rooted binary phylogenetic networks and its implications for tree-based networks}
\author{Momoko Hayamizu}
\address{Department of Statistical Modeling\\ The Institute of Statistical Mathematics, Tokyo, Japan}
\email{hayamizu@waseda.jp}
\curraddr[]{Department of Applied Mathematics\\ Waseda University\\ Tokyo\\ Japan} 
\subjclass[2010]{05C05 (Primary), 05C20, 05C30, 05C70, 05C75, 05C85, 92D15}
\keywords{phylogenetic tree, phylogenetic network, tree-based network, subdivision tree, decision/search, deviation quantification, counting, enumeration, optimization}

\begin{document}
\begin{abstract}
Attempting to recognize a tree inside a phylogenetic network is a fundamental undertaking in evolutionary analysis. In the last few years, therefore, ``tree-based'' phylogenetic networks, which are defined by a spanning tree called a ``subdivision tree'' that is an embedding of a phylogenetic tree on the same leaf-set, have attracted much attention of  theoretical  biologists. However, the application of such networks is still not easy, due to many important computational problems whose time complexities are unknown or not clearly understood.	 
	In this paper, we provide a general framework for solving those various old or new problems on tree-based phylogenetic networks from a coherent perspective, rather than analyzing the complexity of each individual problem or developing an algorithm one by one. More precisely, we establish a structure theorem that gives a way to canonically decompose any rooted binary phylogenetic network $N$ into maximal zig-zag trails that are uniquely determined by $N$, and furthermore use it to characterize the set of subdivision trees of  $N$ in the form of a direct product, in a way reminiscent of the structure theorem for finitely generated Abelian groups. From these main results, we derive a series of linear time (and linear time delay) algorithms for solving the following problems: given a rooted binary phylogenetic network $N$,  1) determine whether or not $N$ has a subdivision tree and find one if there exists any (decision/search problems); 2) measure the deviation of $N$ from being tree-based (deviation quantification problem); 3) compute the number of subdivision trees of $N$ (counting problem); 4) list all subdivision trees of $N$ (enumeration problem); and 5) find a subdivision tree to maximize or minimize a prescribed objective function (optimization problem). All algorithms proposed here are optimal in terms of time complexity. Our results do not only imply and unify various known results in the relevant literature, but also answer many open questions and moreover enable novel applications, such as the estimation of a maximum likelihood tree underlying a tree-based network.   The results and algorithms in this paper still hold true for a special class of rooted non-binary phylogenetic networks. 
\end{abstract}

\maketitle

\section{Introduction}\label{sec:intro}
Phylogenetic networks are widely used to describe reticulate evolution or to represent conflicts in data or uncertainty in evolutionary histories   (\textit{e.g.},~\cite{bryant2004neighbor, SplitsTree4, Huson, MikeBook2}),  but phylogenetic trees are still regarded as a fundamental model of evolution for their ultimate simplicity. In fact, there are numerous situations in which it makes biological sense to try to recognize phylogenetic trees in a phylogenetic tree network. However, given the fact that there are many computationally intractable problems around this subject (\textit{e.g.}, \cite{Linz, van2010locating}), it would be natural to explore special but reasonably wide subclasses of phylogenetic networks with the property that an underlying tree can be easily retrieved.

The above thoughts are closely related to the motivation behind the concepts of ``tree-based phylogenetic networks'' and their underlying ``subdivision trees'' (\textit{i.e.},  phylogenetic trees embedded in tree-based phylogenetic networks in a certain form), which were originally introduced by Francis and Steel in \cite{FS} and have been extensively studied in the last few years (\textit{e.g.}, \cite{Owen, BS2018, FF2020, unrootedTBN2018,  FM2018, newchara2018, UTBN, mathbio2018, JettenOLD, Jetten, pons, Semple2016,  LX}). 
Tree-based phylogenetic networks are a biologically meaningful extension of phylogenetic trees, and form a fairly large class of networks that encompasses many popular subclasses of phylogenetic networks, such as tree-child networks, tree-sibling networks, and reticulation-visible networks \cite{FS}. In \cite{FS}, it was shown that there exists a polynomial time algorithm for finding a subdivision tree of a tree-based phylogenetic network, which led to the expectation that tree-based networks might have some other mathematically tractable properties.

Although the results and questions in \cite{FS}  have prompted discussion on  some computational problems pertaining to tree-based phylogenetic networks and subdivision trees (\textit{e.g.}, \cite{Owen, unrootedTBN2018, newchara2018, JettenOLD, Jetten, pons, LX}), we must emphasize that our present work is more ambitious than previous studies as our goal here is to build a general framework for solving many different problems from a coherent perspective, rather than analyzing the complexity of each problem separately or developing a fast algorithm one by one. The intuition behind our approach is simple:  if we wish to understand a complicated object, it is generally useful to decompose it into smaller, simpler, and more tractable substructures. As is well known, this philosophy is common to the various ``structure theorems'' that have been established in different branches of mathematics, such as the structure theorem for finitely generated Abelian groups (also known as the fundamental theorem of finite Abelian groups) that states that every finitely generated Abelian group can be uniquely decomposed as a direct product of finitely many cyclic groups, in much the same way as the prime factorization of natural numbers. 

In this paper, we establish a ``structure theorem for rooted binary phylogenetic networks'' (Thoerem~\ref{uniquely.decomposable}), which provides a way to canonically decompose any rooted binary phylogenetic network into its intrinsic substructures that exist uniquely. The structure theorem has considerable implications for research on tree-based networks because it does not only provide a new unified perspective to prove some known results in the relevant literature 
through decomposition-based characterizations of tree-based phylogenetic networks
 (Lemma~\ref{iff} and Corollary~\ref{cor:characterization.tbn}), but also, even more importantly, yields a characterization of the set of  subdivision trees of a tree-based phylogenetic network  in the form of a direct product (Theorem~\ref{structure}) in the spirit of the above-mentioned classical structure theorem. Our structural results furnish a series of linear time (and linear time delay) algorithms for a variety of old or new important problems on tree-based networks and subdivision trees. The problems to be discussed in this paper are listed and outlined below, where we start with the simplest problem and then proceed to more advanced ones.
 
 The first is a so-called \textbf{decision/search problems} (Problem~\ref{prob:decision}), which is the most basic object of study in computational complexity theory. As tree-based networks are defined to be phylogenetic networks containing at least one subdivision tree, the problem is formulated as follows: given a rooted binary phylogenetic network $N$, determine whether or not $N$ is tree-based and find a subdivision tree of $N$ if there exists any. This problem has been well studied in the field of combinatorial phylogenetics. In~\cite{FS}, Francis and Steel proved that the decision/search problems can be formulated as the 2-satisfiability problem (2-SAT) and so can be solved in linear time, thus providing an algorithmic characterization of tree-based phylogenetic networks. In \cite{LX}, Zhang gave a different characterization by focusing on matchings in a bipartite graph associated with $N$ and described a simple linear algorithm for the decision part. Independently from \cite{LX}, Jetten \cite{JettenOLD} and Jetten and van Iersel \cite{Jetten} also obtained the same graph-theoretical characterizations in a slightly different form. More recently, Francis~\textit{et al.}~\cite{newchara2018} obtained several new characterizations of tree-based phylogenetic networks, including those in the spirit of Dilworth’s theorem and in terms of matchings in a bipartite graph. In this paper, we provide a new perspective for a unified understanding of these known results and gives a linear time algorithm for solving the decision/search problems in a coherent and straightforward manner.
 
 Related to the decision problem, discussion has also been made on how to compute the deviation measure $\delta(N)$ of a phylogenetic network $N$ from being tree-based \cite{FF2020, newchara2018, JettenOLD, mooiman2018, pons}, which we call the \textbf{deviation quantification problem} (Problem~\ref{prob:deviation}). More precisely, based on the point that any non-tree-based rooted binary phylogenetic network $N$ can be converted into a tree-based network by introducing new leaves \cite{FS}, several studies suggested measuring the degree of the deviation of $N$ from being tree-based by the minimum number $\delta(N)$ of leaves that need to be attached to make $N$ tree-based \cite{newchara2018, JettenOLD} (or by some alternative indices equivalent to $\delta(N)$ \cite{newchara2018}).  The problem of calculating $\delta(N)$ can be viewed as a generalization of the previous decision problem in the sense that we have $\delta(N)= 0$  if and only if $N$ is tree-based. In \cite{newchara2018}, Francis \textit{et al.}~showed that this problem can be solved in $O(n^{3/2})$ time by using a classical algorithm for finding a maximum-sized matching in a bipartite graph associated with $N$, where $n$ denotes the number of vertices of $N$. 
In this paper, we capture the problem in our decomposition-based framework and give a simple formula for $\delta(N)$ (Corollary~\ref{thm:deviation.equals.W-fences}) and an $O(n)$ time algorithm for computing $\delta(N)$, thus improving the current best known bound on the time complexity of the problem.

Another well-studied topic is the following \textbf{counting problem} (Problem~\ref{prob:count}): given a rooted binary phylogenetic network $N$, compute the number $\alpha(N) \geq 0$ of subdivision trees of $N$.  Initially in \cite{FS}, it was noted that this problem might be hard in view of the fact that computing the number of satisfying solutions of 2-SAT is \#P-complete~\cite{valiant1979complexity}. Although this seemed to be a possibility considering a similar tree-counting problem is \#P-complete~\cite{Linz}, several studies independently obtained the formulae for $\alpha(N)$ \cite{portobello2018, JettenOLD, pons} and proposed polynomial time algorithms for counting $\alpha(N)$  \cite{portobello2018, pons}. From an application viewpoint, if one constructs a tree-based phylogenetic network $N$ in some way from biological data, the number $\alpha(N)$ of its subdivision trees can be interpreted as reflecting a certain kind of complexity of $N$ \cite{portobello2018, pons}, as networks having many spanning trees tend to be more complex than those with only a few. In this paper, we derive a simple formula for $\alpha(N)$ that fully elucidates what factors constitute the number $\alpha(N)$ and yields a linear time algorithm for the counting problem.

Despite being closely related to the above counting problem, almost nothing has been done to uncover the complexity of the \textbf{enumeration (listing) problem} (Problem~\ref{prob:listing}): given a tree-based phylogenetic network $N$, list all subdivision trees $T_1,\dots,T_{\alpha(N)}$ of $N$. As mentioned in \cite{FS}, its complexity should be exponential in the number of vertices of $N$ (and it is still exponential in the number of arcs of $N$ because $N$ has $2n+2r-1$ vertices and $2n+3r-2$ arcs, where $n$ and $r$ denote the numbers of leaves and reticulation vertices of $N$, respectively) because $N$ can have exponentially many subdivision trees. We note that, however, this does not deny the existence of efficient listing algorithms. Indeed, in the usual context of algorithm theory, the complexity of enumeration is evaluated in terms of \emph{both} the input and output sizes, not solely the size of the input. In this paper, therefore, we perform a full complexity analysis and provide a linear time delay algorithm (Definition~\ref{dfn:polynomial-delay}), which belongs to the most efficient class of enumeration algorithms, for listing all subdivision trees. As our algorithm can also list a specified number of subdivision trees rather than all trees, it allows for new applications of tree-based phylogenetic networks such as generating subdivision trees uniformly at random.

The last question is the complexity of the following \textbf{optimization problem} (Problem~\ref{prob:optimization}), which has not been treated in the previous literature but is newly defined in this paper: given a tree-based phylogenetic network $N$ in which each arc $a$ is assigned a non-negative weight $w(a)$, find a subdivision tree $T$ of $N$ to maximize (or minimize) the value of a prescribed objective function $f(T)$.
From a statistical standpoint, this problem can be interpreted as modeling the situation where we are given a data-derived phylogenetic network $N$ together with the probability $w(a)$ of each arc $a$ of $N$ and aim to estimate the best tree $T$ underlying $N$ to maximize the likelihood or log-likelihood $f(T)$ of $T$. Despite its fundamental importance, this problem has not been analyzed or even mentioned in the previous literature, because, as we have seen, existing studies only considered tree-based phylogenetic networks in unweighted settings. Even though there are no known results for this  problem, it clearly takes exponential time to calculate the values of the objective function $f$ for all (possibly exponentially many) subdivision trees and to choose the largest or smallest one. Then, the question is whether or not one can find an optimal subdivision tree without doing such an exhaustive search.  Remarkably,  we provide a linear time algorithm for solving the above optimization problem. The key insight behind the method is that our structure theorem enables us to uniquely decompose the objective function $f$ into the sum or product of local objective functions $f_1,\dots,f_\ell$, and by piecing together an optimal solution for each $f_i$, we can automatically obtain a global optimum. Our results on the optimization problem are expected to open up new avenues for statistical applications of tree-based phylogenetic networks, such as the computation of maximum likelihood subdivision trees described above.

The remainder of this paper is organized as follows. In Section~\ref{sec:preliminaries}, we set up basic definitions and notation. Section~\ref{sec:prob} is divided into five subsections to give a more formal description of the above-mentioned five problems along with the relevant results. We also review the basics of analyzing the complexity of the enumeration problem. 
In Section~\ref{sec:results}, we prove the structure theorem for rooted binary phylogenetic networks  (Theorem~\ref{uniquely.decomposable}) and a characterization of the set of subdivision trees of a tree-based phylogenetic network (Theorem~\ref{structure}). In addition to these main results, we also describe some byproducts of Theorem~\ref{uniquely.decomposable} in Subsection~\ref{subsec:characterization.tbn.known.results}: decomposition-based characterizations of tree based phylogenetic networks (Lemma~\ref{iff} and Corollary~\ref{cor:characterization.tbn}) and new proofs of several known results.
In Section~\ref{sec:implications}, we derive from the above results a series of linear time (and linear time delay) algorithms for solving the five problems. We provide pseudocode of each algorithm and a demonstration using a numerical example where appropriate. In Section~\ref{sec:non-binary}, we mention that all results in this paper hold true for a special class of non-binary phylogenetic networks. 
Finally, we conclude the paper by suggesting some open problems and possible directions for further research in Section~\ref{sec:futurework}.

\section{Preliminaries}\label{sec:preliminaries}
Throughout this paper, $X$ denotes a non-empty finite set, and the terms ``graph'' and ``network'' all refer to  finite, simple, acyclic digraphs (directed graphs), which we now define.  A \emph{digraph}  is an ordered pair $(V,A)$ of a set $V$ of vertices and a set $A$ of \emph{arcs} (\textit{i.e.}, directed edges).   
Given a digraph $G$, we write $V(G)$ and $A(G)$ to represent the sets of vertices and arcs of $G$, respectively. If $V(G)$ and $A(G)$ are finite sets, then 
$G$ is said to be \emph{finite}. 
We use the notation $(u,v)$ for an arc $a$ oriented from a vertex $u$ to a vertex $v$, and also write ${\it tail}(a)$ and ${\it head}(a)$ to mean $u$ and $v$, respectively.  
A digraph $G$ is said to be \emph{simple} if   ${\it head}(a)\neq {\it tail}(a)$ holds for any $a\in A(G)$ and $({\it head}(a), {\it tail}(a))\neq ({\it head}(a^\prime),{\it tail}(a^\prime))$ holds for any $a, a^\prime \in A(G)$ with $a\neq a^\prime$.  A simple digraph $G$ is said to be \emph{acyclic}  if $G$ has no cycle, namely, there is no sequence $(a_0, a_1,\dots,a_{k-1})$ of two or more elements of $A(G)$ such that ${\it head}(a_{i-1})={\it tail}(a_{i})$ holds for each $i\in [1,k]$, with indices taken $\mathrm{mod}$ $ k$.

For graphs $G$ and $H$, $G$ is called a \emph{subgraph} of $H$ if both   $V(G) \subseteq V(H)$ and $A(G) \subseteq A(H)$ hold, in which case we write $G \subseteq H$.  
A subgraph $G\subseteq H$ is said to be \emph{proper} if we have either $V(G)\neq V(H)$ or $A(G)\neq A(H)$.  A subgraph $G\subseteq H$ is said to be \emph{spanning} if  $V(G)=V(H)$ holds. 
Given a graph $G$ and a  subset  $A^\prime\subseteq A(G)$, $A^\prime$ is said to \emph{induce the subgraph $G[A^\prime]:=(V(A^\prime), A^\prime)$ of $G$}, where $V(A^\prime)$ denotes a set of the heads and tails of all arcs in $A^\prime$. 
Besides, given a graph $G$  with $|A(G)| \geq 1$ and a partition  $\{A_1,\dots,A_{\ell}\}$ of $A(G)$, the collection  $\{G[A_1], \dots, G[A_{\ell}]\}$ is called a   \emph{decomposition}  of $G$, where a partition of a set $S$ is defined to be a collection of non-empty disjoint subsets of $S$ whose union is $S$.

For an arc $(u, v)$ of a graph $G$, the \emph{subdivision} of $(u, v)$ refers to the operation of introducing a new vertex $x$ and replacing $(u, v)$ with the new consecutive arcs $(u, x)$ and $(x, v)$. Also, any graph that can be obtained from $G$ by subdividing each arc zero or more times is called a \emph{subdivision} of $G$. 

For  a vertex $v$ of a digraph $N$,  the \emph{in-degree} and \emph{out-degree of $v$ in $N$}, denoted by ${\it deg}^-_N(v)$ and ${\it deg}^+_N(v)$, are defined to be the cardinalities of the sets $\{a\in A(N) \mid {\it head}(a)=v\}$ and $\{a\in A(N) \mid {\it tail}(a)=v\}$, respectively. Given an acyclic  digraph $N$, a vertex  $v\in V(N)$   is  called a \emph{leaf} of $N$ if  $({\it deg}^-_N(v), {\it deg}^+_N(v))=(1,0)$ holds. 
 
\begin{definition}\label{dfn:rbpn}
Given a finite set $X$, a \emph{rooted binary phylogenetic $X$-network} is defined to be any finite simple acyclic digraph $N$ that has the following properties:
\begin{enumerate}
  \item there exists a unique vertex $\rho$ of $V(N)$  with ${\it deg}^-_N(\rho)=0$ and ${\it deg}^+_N(\rho)\in \{1,2\}$; 
  \item $X$ is the set of leaves of $N$;
  \item each vertex $v\in V(N)\setminus (X\cup \{\rho\})$ satisfies $\{{\it deg}^-_N(v), {\it deg}^+_N(v)\}= \{1,2\}$.
\end{enumerate}
\end{definition}

In Definition~\ref{dfn:rbpn},  the vertex $\rho$ is called \emph{the root} of $N$, which can be interpreted as the origin of all species that are signified by the leaves of $N$ (\textit{i.e.}, the elements of $X$). In addition, we call  $v\in V(N)\setminus (X\cup \{\rho\})$ a \emph{tree vertex} of $N$ if $({\it deg}^-_N(v), {\it deg}^+_N(v))= (1,2)$ holds, and  a \emph{reticulation vertex} of $N$  otherwise.  In the case when $N$ has no reticulation vertex, $N$ is called a rooted binary phylogenetic $X$-\emph{tree}. 

\begin{definition}[\cite{FS}; see also~\cite{MikeBook2}]  \label{dfn:subdiv}
 If a rooted binary phylogenetic $X$-network $N$ has a spanning tree $T$ of $N$ that is a subdivision of a rooted binary phylogenetic $X$-tree $\widetilde{T}$, then $T$ is called a \emph{subdivision tree of $N$} (and $\widetilde{T}$ is called a \emph{base tree of $N$}). 
\end{definition}

Although subdivision trees are known by different names, such as support trees \cite{FS}, embedded support trees \cite{pons} and embedded spanning trees \cite{FM2018}, we use the terminology from \cite{MikeBook2} in this paper.

\begin{definition}[\cite{FS}]\label{dfn:tbn}
A rooted binary phylogenetic $X$-network $N$ is called a \emph{tree-based phylogenetic network} (\emph{on $X$}) if $N$ has at least one subdivision tree.  
\end{definition}

We note that, in \cite{FS},  tree-based phylogenetic networks were defined in a more algorithmic way using the operation of placing additional vertex-disjoint arcs between arcs of a rooted binary phylogenetic $X$-tree. This original definition helps understand that tree-based networks are such a natural extension of rooted binary phylogenetic $X$-trees that can be interpreted as ``merely trees with additional arcs'', but we here use Definition~\ref{dfn:tbn}, which was shown in \cite{FS} to be equivalent to the original one, like many other studies (\textit{e.g.}, \cite{fischer2018non, newchara2018, mathbio2018}). One advantage of Definition~\ref{dfn:tbn} is that it can be applied to non-binary phylogenetic networks as well. Although such a generalization is not the main focus of this paper, we treat some non-binary phylogenetic networks in Section~\ref{sec:non-binary}.

As we will see later in Theorem~\ref{thm:bijection}, the concept of ``admissible subsets'' played a central role in \cite{FS}. In this paper, we slightly generalize the original definition in \cite{FS} for the sake of technical convenience and use the following Definition~\ref{dfn:admissible}. 

\begin{definition}[\cite{FS}]\label{dfn:admissible}
Suppose $Z$ is any subgraph of a rooted binary phylogenetic $X$-network. We say that a subset $S$ of $A(Z)$ is \emph{admissible} if $S$ satisfies the following conditions: 
\begin{description}
\item [C0] $S$ contains all $(u,v)\in A(Z)$ with ${\it deg}^-_{N}(v)=1$ or ${\it deg}^+_{N}(u)=1$.
\item [C1] for any  $a_1, a_2\in A(Z)$ with ${\it head}(a_1)={\it head}(a_2)$, exactly one of $\{a_1, a_2\}$ is in $S$. 
\item [C2] for any $a_1, a_2\in A(Z)$ with ${\it tail}(a_1)={\it tail}(a_2)$, at least one of $\{a_1, a_2\}$ is in $S$. 
\end{description}
\end{definition}

  \section{Problems and related work}\label{sec:prob}
  In this section,  we give  a more detailed description of the problems to be solved in this paper that were overviewed in Section~\ref{sec:intro}. As before, we  describe the simplest problem first and then gradually move to more advanced ones.
  
\subsection{Decision/search problems} 
   \begin{problem}[\cite{FS}]\label{prob:decision}
   Given a rooted binary phylogenetic $X$-network $N$, determine whether or not  $N$ is a tree-based phylogenetic network on $X$ and find a subdivision tree of $N$ if there exists any.
 \end{problem}

In~\cite{FS}, Francis and Steel showed that the above decision/search problems can be formulated as the 2-satisfiability problem (2-SAT) and so can be solved in linear time, based on the following theorem that characterizes tree-based phylogenetic networks by the existence of an admissible subset (Definition~\ref{dfn:admissible}).

\begin{theorem}[Theorem 1(a) in \cite{FS}]\label{thm:bijection}
   A rooted binary phylogenetic $X$-network $N$ is tree-based if and only if there exists an admissible subset $S$ of $A(N)$. In this case, $S$ induces a subdivision tree $N[S]$ of $N$. Moreover, there exists a bijection between the families of admissible subsets $S$ of $A(N)$ and  of arc-sets of subdivision trees of $N$. 
\end{theorem}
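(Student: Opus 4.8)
The plan is to prove the biconditional and the bijection simultaneously by analysing the single natural map $S \mapsto N[S]$ that sends a subset of $A(N)$ to the induced subgraph, together with its candidate inverse $T \mapsto A(T)$. The crux is to translate the three admissibility conditions into statements about the in- and out-degrees of vertices of $N[S]$. Since the only vertices of $N$ with two incoming arcs are the reticulations and the only ones with two outgoing arcs are the tree vertices (and possibly the root), condition C1 says that every reticulation retains exactly one parent arc, condition C2 says that every tree vertex retains at least one child arc, and condition C0 forces into $S$ every arc incident to a vertex having a unique such arc in $N$ (all arcs into leaves and tree vertices, and all arcs out of reticulations and out of the root when ${\it deg}^+_N(\rho)=1$). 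Once this degree dictionary is set up, both directions reduce to bookkeeping.

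First I would establish the \emph{if} direction. Assuming $S$ is admissible, I read off from C0 and C1 that in $N[S]$ every leaf, tree vertex and reticulation has in-degree exactly $1$, while $\rho$ has in-degree $0$; hence every non-root vertex has a unique parent. Because $N$ is acyclic, so is $N[S] \subseteq N$, and tracing the unique parent arc backwards from any vertex must terminate at the sole in-degree-$0$ vertex, namely $\rho$. This shows every vertex lies on a directed path from $\rho$, so $N[S]$ is a spanning arborescence. It then remains to see that $N[S]$ is a \emph{subdivision} of a rooted binary phylogenetic $X$-tree: C0 gives each reticulation out-degree $1$, C2 gives each tree vertex out-degree $1$ or $2$, and the root out-degree $1$ or $2$, so the only branching vertices of $N[S]$ are the tree vertices of out-degree $2$. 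Suppressing all in-degree-one/out-degree-one vertices therefore yields a rooted tree whose non-root internal vertices all have out-degree $2$ and whose leaf-set is still $X$ (suppression creates no new leaves), i.e.\ a rooted binary phylogenetic $X$-tree $\widetilde{T}$ of which $N[S]$ is a subdivision. Thus $N[S]$ is a subdivision tree.

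Conversely, given a subdivision tree $T$ of $N$, I would verify that $S := A(T)$ is admissible. Since $T$ is a spanning arborescence whose leaf-set is exactly $X$, every non-root vertex has in-degree $1$ in $T$, and every internal vertex—in particular every reticulation and every tree vertex, none of which lies in $X$, and also $\rho$—has out-degree at least $1$. From the in-degree-one property, any $v$ with ${\it deg}^-_N(v)=1$ keeps its unique parent arc and any reticulation keeps exactly one of its two parent arcs, giving the head part of C0 together with C1; from the out-degree-at-least-one property, any $u$ with ${\it deg}^+_N(u)=1$ keeps its unique child arc and any tree vertex keeps at least one of its two child arcs, giving the tail part of C0 together with C2. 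Hence $A(T)$ is admissible. Finally, since $A(N[S])=S$ and $N[A(T)]=T$ for a spanning $T$, the two maps are mutually inverse, which yields the asserted bijection between admissible subsets and arc-sets of subdivision trees.

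I expect the main obstacle to be the middle step of the \emph{if} direction: arguing rigorously that $N[S]$ is connected and that suppression produces a \emph{legitimate} rooted binary phylogenetic $X$-tree rather than a structure with a spurious leaf or a branching reticulation. The degree bookkeeping for reticulations, tree vertices and the root must be handled case by case, and the acyclicity of $N$ is essential to guarantee that the unique-parent backtracking reaches $\rho$ rather than looping. The remaining verifications are routine once the dictionary between C0--C2 and the degree profile of $N[S]$ is in place.
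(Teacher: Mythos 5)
This statement is not proved in the paper at all: it is imported verbatim as Theorem~1(a) of Francis and Steel \cite{FS}, so there is no internal proof to compare your attempt against. Judged on its own, your argument is correct and self-contained, and it follows essentially the same strategy as the original source: translate \textbf{C0}--\textbf{C2} into the degree profile of $N[S]$ (every non-root vertex acquires in-degree exactly $1$, reticulations out-degree $1$, tree vertices and the root out-degree $1$ or $2$), use acyclicity to conclude that the unique-parent structure makes $N[S]$ a spanning arborescence rooted at $\rho$, suppress the degree-$(1,1)$ vertices to recover a rooted binary phylogenetic $X$-tree, and run the bookkeeping in reverse for the converse. The only step worth making fully explicit is that $N[S]$ is in fact \emph{spanning}, i.e.\ that every vertex of $N$ — including $\rho$ — is incident to some arc of $S$; for $\rho$ this follows from \textbf{C0} when ${\it deg}^+_N(\rho)=1$ and from \textbf{C2} when ${\it deg}^+_N(\rho)=2$, and for every other vertex it follows from the in-degree-one property you already established. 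With that sentence added, the proof is complete.
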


As Problem~\ref{prob:decision} has been intensively studied, it is known that  tree-based networks can be characterized in many other ways. In \cite{LX}, Zhang gave a different characterization in terms of matchings in a bipartite graph associated with $N$ by using Hall's marriage theorem and developed a linear algorithm for the decision part (Theorem~\ref{thm:zhang}). Independently from \cite{LX}, Jetten \cite{JettenOLD} also obtained some characterizations in a similar approach (Theorem 2.4, Corollary 2.7, and Theorem 2.10 in \cite{JettenOLD}), and Jetten and van Iersel \cite{Jetten} showed that  one of them is is virtually the same as Zhang's, albeit slightly different in their expression (Theorem~\ref{thm:jetten.characterization}).  More recently, Francis~\textit{et al.}~\cite{newchara2018} obtained several new characterizations of tree-based phylogenetic networks, in terms of path partitions and antichains (in the spirit of Dilworth’s theorem), as well as via matchings in a bipartite graph. One of the characterizations in \cite{newchara2018} is restated in Theorem~\ref{thm:characterization.disjoint.paths}. 
We will have a closer look at these known results in Subsection~\ref{subsec:characterization.tbn.known.results}.%
     
\subsection{Deviation quantification problem}
 \begin{problem}[\cite{newchara2018}]\label{prob:deviation}
   Given a rooted binary phylogenetic $X$-network $N$, compute the minimum number $\delta(N)$ of leaves that need to be attached to make $N$ tree-based.   
 \end{problem}
 
Related to the decision problem, discussion has also been made on what we call the \textbf{deviation quantification problem} that requires computing the deviation measure $\delta(N)$ of a phylogenetic network $N$ from being tree based \cite{FF2020, newchara2018, JettenOLD, mooiman2018, pons}. More precisely, based on the point that any non-tree-based rooted binary phylogenetic network $N$ can be converted into a tree-based network by introducing new leaves \cite{FS}, several studies suggested measuring the degree of the deviation of $N$ from being tree-based by the minimum number $\delta(N)$ of leaves that need to be attached to make $N$ tree-based \cite{newchara2018, JettenOLD} (or some alternative indices equivalent to $\delta(N)$ \cite{newchara2018}).

  The problem of calculating $\delta(N)$ can be viewed as a generalization of the decision problem in the sense that we have $\delta(N)= 0$  if and only if $N$ is tree-based. In \cite{newchara2018}, Francis \textit{et al.}~showed that this problem can be solved in $O(n^{3/2})$ time, where $n$ denotes the number of vertices of $N$, by applying a classical algorithm for finding a maximum-sized matching in a bipartite graph associated with $N$. No studies obtained a better running time bound.

\subsection{Counting problem}
\begin{problem}[\cite{FS}]\label{prob:count}
Given a rooted binary phylogenetic $X$-network $N$, count the number $\alpha(N)$ ($\geq 0$) of subdivision trees of $N$.
 \end{problem}
To see what Problem~\ref{prob:count} is exactly asking for, we recall that there exists a one-to-one correspondence between the family of admissible subsets of $A(N)$ and the family of arc-sets of subdivision trees of $N$ (Theorem~\ref{thm:bijection}). It follows that the above number $\alpha(N)$ equals the number of admissible subsets of $A(N)$. This is worth noting because two different admissible subsets of $A(N)$ can result in isomorphic trees, as shown  in Figure~\ref{fig:crossovers}. In other words, Problem~\ref{prob:count}, if we take the network in Figure~\ref{fig:crossovers} as input $N$, we must count its isomorphic subdivision trees in duplicate and conclude that $\alpha(N)=4$ holds. We will discuss such counting problems that are different from but related to Problem~\ref{prob:count} in Subsection~\ref{subsec:direction1}.

\begin{figure}[htbp]
\centering
\includegraphics[scale=.8]{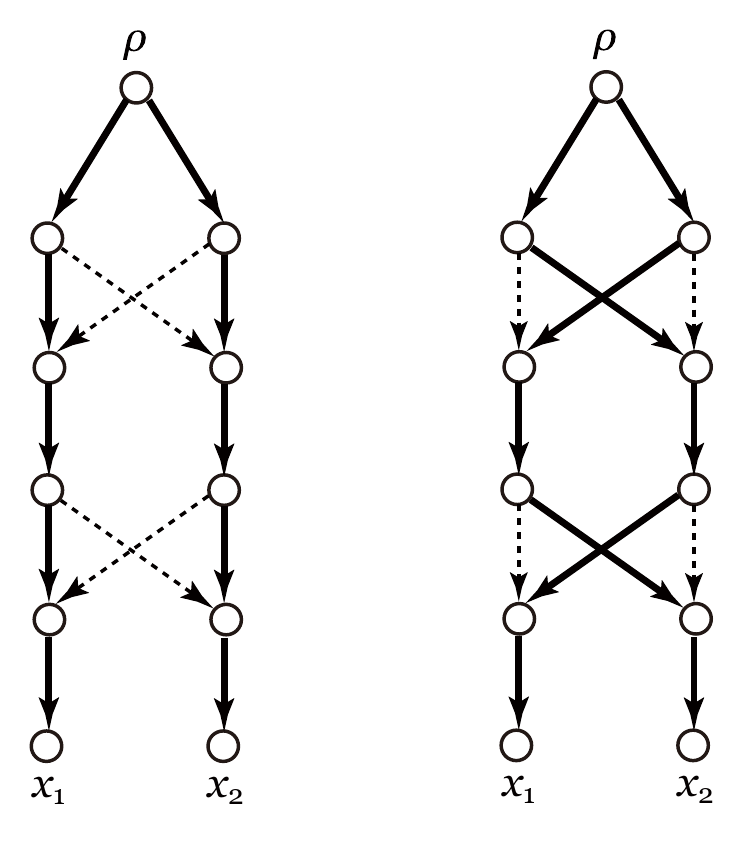}
\caption{An illustration of two distinct subdivision trees that become isomorphic if internal vertices are not distinguished. The arcs of a subdivision tree are shown in bold. 
\label{fig:crossovers}}
\end{figure}

Initially in \cite{FS}, it was noted that this problem might be hard in view of the fact that computing the number of satisfying solutions of 2-SAT is \#P-complete~\cite{valiant1979complexity}, which is a plausible possibility considering a similar tree-counting problem is \#P-complete~\cite{Linz}. However, several independent follow-up studies obtained formulae for $\alpha(N)$ \cite{portobello2018, JettenOLD, pons}  and proposed polynomial time algorithms for counting $\alpha(N)$  \cite{portobello2018, pons}. Nevertheless, a more detailed time complexity analysis has not been provided to date (except for the conference presentation \cite{portobello2018}).

\subsection{Enumeration (listing) problem}
\begin{problem}[\cite{FS}]\label{prob:listing}
Given a tree-based phylogenetic network $N$ on $X$, list all subdivision trees $T_1,\dots, T_{\alpha(N)}$ of $N$.
 \end{problem}
 
 Notice that $\alpha(N)$ in Problem~\ref{prob:listing} is assumed to be positive as the enumeration would not be meaningful otherwise (the same applies to the optimization discussed below). We also note that similarly to counting subdivision trees,  what is required in Problem~\ref{prob:listing} is essentially the list of admissible subsets of $A(N)$.
 Although Francis and Steel \cite{FS} raised the questions on the complexities of both Problems~\ref{prob:count} and \ref{prob:listing}, the complexity of Problem~\ref{prob:listing} remains unknown. For the convenience of the reader, below we recall the basics of analyzing the complexity of enumeration problems.

In general, the number of solutions of   enumeration problems can be exponential in the input size or even infinite, and in such a case, even the most efficient algorithms should take a very long time before generating all solutions. Therefore, the efficiency of enumeration algorithms needs to be defined in a different way than that of algorithms for solving other kinds of computational problems. 
Particularly important in this context is the notion of ``polynomial (time) delay algorithms'' in Definition~\ref{dfn:polynomial-delay}, which was originally introduced by Johnson, Yannakakis and Papadimitriou  in  \cite{johnson1988generating}.  As in Definition~\ref{dfn:polynomial-delay}, this concept incorporates the natural idea that a fast enumerating algorithm should be able to list solutions one after another with a small delay.

\begin{definition}[\cite{johnson1988generating}]\label{dfn:polynomial-delay}
An enumeration algorithm is called a \emph{polynomial (time) delay algorithm} if it can do each of the following  in time bounded by a polynomial function $f$ of the input size $n$:
\begin{itemize}
	\item to output a first solution;
	\item to output a next solution that has not been found yet if there exists any and stop otherwise. 
\end{itemize}
In the case when $f$ is a linear function of $n$, the algorithm is particularly called a \emph{linear (time) delay algorithm}.
\end{definition}

As can be seen easily from Definition~\ref{dfn:polynomial-delay}, polynomial delay algorithms have the remarkable advantage that their running time is linear in the size of the output \cite{goldberg2009efficient}. 
For example, if there exists a polynomial delay algorithm for solving Problem~\ref{prob:listing} such that  each step in Definition~\ref{dfn:polynomial-delay} can be done in  $O(f(n))$ time,  then the algorithm can list $k$ subdivision trees  in  $O(k\cdot f(n))$ time, where $f(n)$ is a  polynomial function of the input size $n$. Note that the converse is not true; in fact, an algorithm being able to generate $k$ solutions in $O(k\cdot f(n))$ time does not necessarily have a polynomial delay, because the time between the output of any two consecutive solutions may not be $O(f(n))$. As indicated by this, polynomial delay algorithms are considered as the most efficient class of enumeration algorithms.

\subsection{Optimization problem}
   \begin{problem}\label{prob:optimization}
Given a tree-based phylogenetic network $N$ on $X$ and its associated weighting function $w:A(N)\rightarrow \mathbb{R}_{\geq 0}$, find a subdivision tree $T$ of $N$ to maximize  the value of the objective function $f(T)=\sum_{a\in A(T)} {w(a)}$.
 \end{problem}

Similarly to Problem~\ref{prob:listing}, $\alpha(N)$ in Problem~\ref{prob:listing} is assumed to be positive again. 
We note that Problem~\ref{prob:optimization} can be converted into a minimization problem by changing the sign, or one could use the objective function $f(T)=\prod_{a\in A(T)}{w(a)}$ by taking the exponential. Typical applications of Problem~\ref{prob:optimization} include the setting where, given a phylogenetic network $N$ together with the probability $w(a)$ of the existence of each arc $a$ of $N$, we wish to estimate a subdivision tree $T$ of $N$ to maximize the likelihood or log-likelihood $f(T)$ of $T$. Despite its direct relevance to such statistical applications, Problem~\ref{prob:optimization} has not been treated in the literature to date. 

An obvious algorithm computes the values $f(T_1),\dots, f(T_{\alpha(N)})$ of the objective function for all subdivision trees of $N$ and then choose the largest (or smallest) one among them. However,  this takes exponential time in the worst case because $N$ can have exponentially many subdivision trees. Then, it would be significant if we could develop a method that can find an optimal solution without doing such an exhaustive search.

\section{Main results}\label{sec:results}
In Subsection~\ref{subsec:decomposition}, we prove the structure theorem that offers a way to canonically decompose a rooted binary phylogenetic network into  ``maximal zig-zag trails'' that exist uniquely for each network (Theorem~\ref{uniquely.decomposable}). 
In Subsection~\ref{subsec:characterization.tbn.known.results}, we touch on decomposition-based characterizations of tree-based phylogenetic networks and new proofs of some known results. In Subsection~\ref{subsec:characterization.set.subdivision.trees}, we establish a characterization of the set of subdivision trees of a tree-based phylogenetic network (Theorem~\ref{structure}).

\subsection{Canonical decomposition of rooted binary phylogenetic networks}\label{subsec:decomposition}
 We note that the idea of ``maximum zigzag trails'' is a simple and natural one, and in fact, a similar concept called ``maximum zig-zag paths'' exists in the literature \cite{JettenOLD, Jetten, LX}. However, those previous studies defined the notion in relation to bipartite undirected graphs and for a different purpose than ours. Therefore, in the following, we give the necessary definitions and terminology which we will use in this paper. 

For a rooted binary phylogenetic $X$-network $N$, we define a  \emph{zig-zag trail in $N$} as a connected subgraph $Z$ of $N$ with $|A(Z)|\geq 1$ such that there exists a permutation $(a_1,\dots,a_m)$ of $A(Z)$ where either ${\it head}(a_i)={\it head}(a_{i+1})$ or ${\it tail}(a_i)={\it tail}(a_{i+1})$ holds for each $i\in [1,m-1]$.  
Any zig-zag trail $Z$ in $N$ can be expressed by an alternating sequence of (not necessarily distinct) vertices and distinct arcs, such as $(v_0, (v_0, v_1), v_1, (v_2, v_1), v_2, (v_2, v_3), \dots,  (v_m, v_{m-1}), v_m)$; however, we will more concisely represent  $Z$ by writing  $v_0>v_1<v_2>v_3< \cdots > v_{m-1} < v_m$ or in reverse order. The notation $(a_1,\dots,a_m)$ may be also used when no confusion arises. 

A zig-zag trail $Z$ in $N$ is said to be \emph{maximal} if $N$ contains no zig-zag trail $Z^\prime$ such that $Z$ is a proper subgraph of $Z^\prime$. It is easy to see that maximal zig-zag trails in $N$ are classified into four types (as mentioned in \cite{LX} as well).  In this paper, borrowing the language of the theory of partially ordered sets,  we  name each of the four as follows 
 (see also Figure~\ref{fig:zigs-labeled}).  
A maximal zig-zag trail $Z$ in $N$ with even $m:=|A(Z)|\geq 4$ is called a \emph{crown} if $Z$  can be written in the form $v_0 < v_1 > v_2 < v_3 > \cdots  > v_{m-2} < v_{m-1} > v_{m} = v_0$; otherwise, it is called a \emph{fence}. 
Furthermore,  a fence $Z$ with odd $m:=|A(Z)|\geq 1$  is called an \emph{N-fence}, which can be expressed as  $Z: v_0 > v_1 < v_2 > v_3 < \cdots >  v_{m-2} < v_{m-1} > v_m$. Also, a fence $Z$ with even $m\geq 2$ is called a \emph{W-fence} if it can be written as $Z: v_0 > v_1 < v_2 > v_3 < \cdots < v_{m-2} > v_{m-1} < v_m$ while it is called an \emph{M-fence} if it can be written as  $Z: v_0 < v_1 > v_2 < v_3 > \cdots > v_{m-2} < v_{m-1} > v_m$. 
For any fence $Z$, its vertices $v_0$ and $v_m$ on both ends are called the \emph{endpoints} of $Z$.

\begin{figure}[htbp]
\centering
\includegraphics[scale=.8]{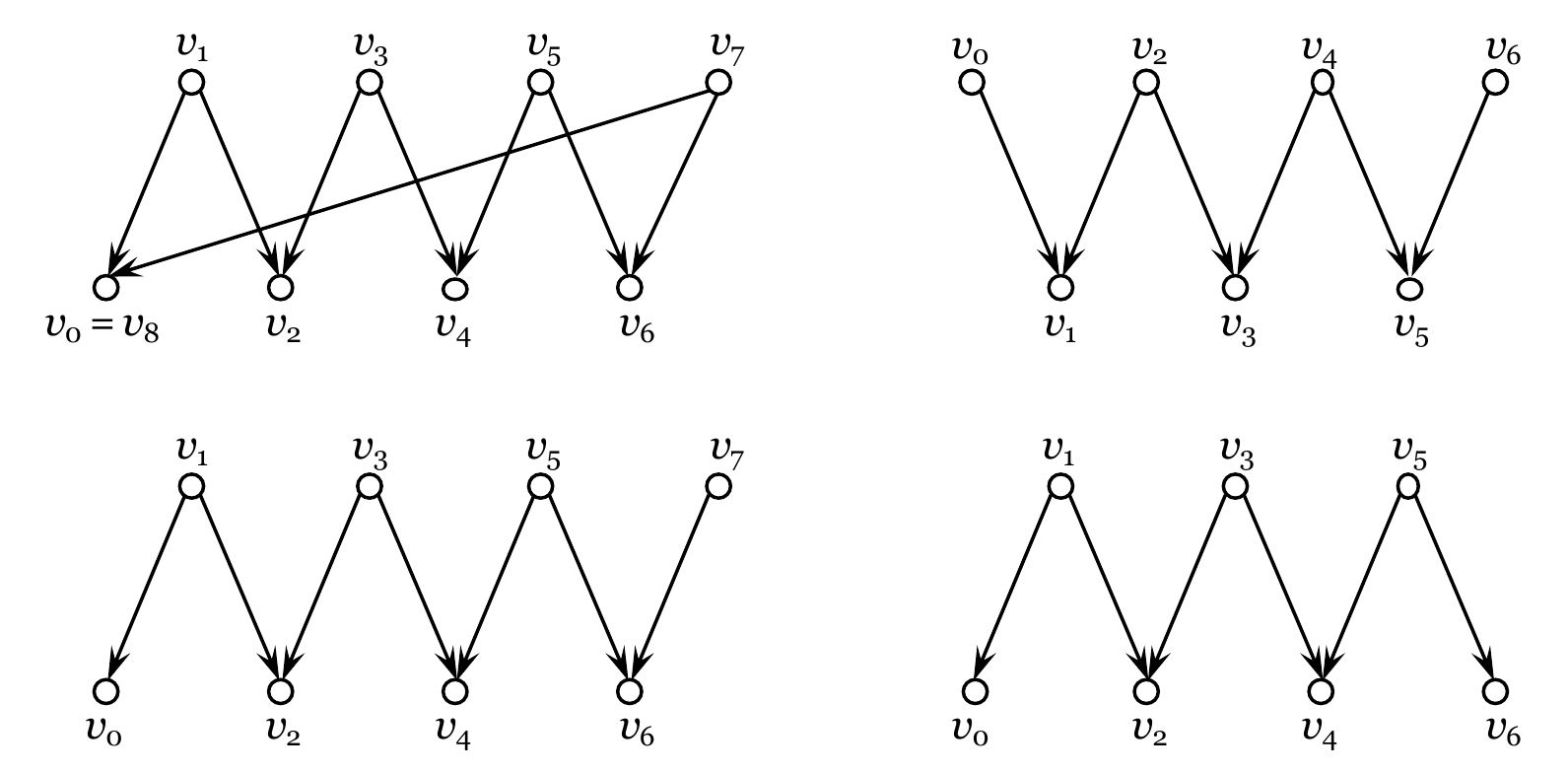}
\caption{An illustration of the four types of maximum zigzag trails of $N$ (modified from Figure~3 in \cite{LX}), each of which we explicitly name in this paper. Top left is a crown. Botton left shows an N-fence. On the right are a W-fence (top) and an M-fence (bottom). Note that this figure only illustrates the simplest cases. For a different  example, see Remark~\ref{rem:atypical} and Figure~\ref{fig:rem-Mfence}.   \label{fig:zigs-labeled}}
\end{figure}

\begin{remark}\label{rem:atypical}
Although the terms fence and crown in the theory of partially ordered sets usually refer to those that can be represented using bipartite graphs as in Figure~\ref{fig:zigs-labeled} (the interested reader is referred to~\cite{Schroder}), in our terminology, such an atypical M-fence as in Figure~\ref{fig:rem-Mfence} is also allowed. 
\end{remark}

\begin{figure}[htbp]
\centering
\includegraphics[scale=.8]
{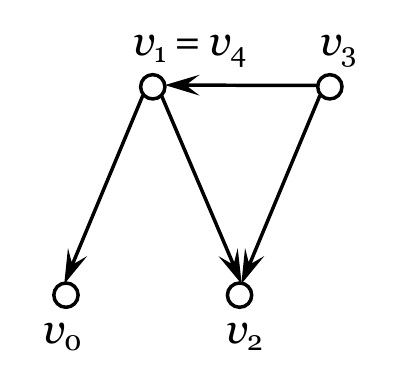}
\caption{An illustrative example mentioned in Remark~\ref{rem:atypical}.  The above  shows a possible maximal M-fence in $N$ that can be written in the form $v_0<v_1>v_2<v_3>v_4$. A similar illustration of an atypical W-fence can also be found in Figure 11(b) in \cite{Jetten}. 
\label{fig:rem-Mfence}}
\end{figure}

We can now state the first main result, which says that every rooted binary phylogenetic $X$-network is composed of the four types of maximal zig-zag trails.

 \begin{theorem}[\textbf{Structure theorem for rooted binary phylogenetic networks}]\label{uniquely.decomposable}
 For any  rooted binary phylogenetic $X$-network $N$,  there exists a unique decomposition $\mathcal{Z}=\{Z_1,\dots,Z_\ell\}$  of $N$ such that each $Z_i\in \mathcal{Z}$ is a maximal zig-zag trail in $N$.
\end{theorem}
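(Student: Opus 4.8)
The plan is to reduce the statement to the elementary fact that a simple undirected graph in which every vertex has degree at most two splits uniquely into connected components, each of which is a path or a cycle. First I would introduce an auxiliary graph $H$ on the vertex set $A(N)$: I declare two distinct arcs $a, a'$ to be \emph{adjacent} precisely when ${\it head}(a)={\it head}(a')$ or ${\it tail}(a)={\it tail}(a')$ (note that, by simplicity, these two events cannot happen simultaneously, since no two arcs share both a head and a tail). The role of the binary hypothesis now becomes visible: by Definition~\ref{dfn:rbpn}, every vertex $v$ of $N$ satisfies ${\it deg}^-_N(v)\le 2$ and ${\it deg}^+_N(v)\le 2$, so the head of any arc $a$ is shared with at most one other arc and the tail of $a$ is shared with at most one other arc. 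Hence each arc has at most one ``head-partner'' and at most one ``tail-partner'', and these are necessarily distinct arcs; in particular every vertex of $H$ has degree at most two.

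Next I would match the combinatorics of $H$ with the zig-zag trails of $N$. A sequence of distinct arcs in which consecutive terms share a head or a tail is, by definition, exactly a zig-zag trail in $N$, and it is exactly a path (or closed walk) in $H$; conversely every zig-zag trail arises this way. Because $H$ has maximum degree two, each of its connected components $C$ is a path or a cycle, and I would read off the induced subgraph $N[A(C)]$ of $N$. Listing the arcs of $C$ along the path (or cyclically, for a cycle) yields a permutation of $A(C)$ witnessing that $N[A(C)]$ is a zig-zag trail, and this trail is \emph{maximal}: any larger zig-zag trail would correspond to a connected subgraph of $H$ strictly containing the component $C$, which is impossible. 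Since the arc sets of the components of $H$ partition $A(N)$, the collection $\mathcal{Z}=\{N[A(C)] : C \text{ a component of } H\}$ is a decomposition of $N$ in the sense of Section~\ref{sec:preliminaries} into maximal zig-zag trails, establishing existence. As a byproduct, the forced alternation of head-sharing and tail-sharing edges along each component recovers the four-type classification into crowns and N-, W-, and M-fences.

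For uniqueness, I would show that \emph{every} maximal zig-zag trail of $N$ is of the above form, so that no other decomposition is possible. Given a maximal zig-zag trail $Z$, its arc set is connected in $H$ and hence lies in a unique component $C$, so $A(Z)\subseteq A(C)$. If $A(Z)\subsetneq A(C)$, then by connectedness of $C$ some arc $a'\in A(C)\setminus A(Z)$ is adjacent in $H$ to an arc $a\in A(Z)$; since $a$ has degree at most two in $H$, it cannot be an interior arc of the path $Z$, so $a$ is an endpoint and $a'$ extends $Z$ to a strictly larger zig-zag trail, contradicting maximality. Therefore $A(Z)=A(C)$, the maximal zig-zag trails of $N$ are exactly the subgraphs $N[A(C)]$, and the induced partition of $A(N)$ coincides with the partition into components of $H$. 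As the connected components of a graph are uniquely determined, so is $\mathcal{Z}$.

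The only genuinely delicate point, and the one I would treat most carefully, is the degree bound on $H$: everything downstream is the standard decomposition of a bounded-degree graph into paths and cycles, but that reduction is legitimate only because the binary structure simultaneously caps both ${\it deg}^-_N$ and ${\it deg}^+_N$ at two and because simplicity prevents an arc's head-partner and tail-partner from coinciding. I would therefore verify these facts directly from Definition~\ref{dfn:rbpn}, checking the root, the leaves, and the internal (tree and reticulation) vertices separately, before invoking the component decomposition.
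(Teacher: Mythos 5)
Your proof is correct, and it reaches the conclusion by a genuinely different packaging of the same underlying fact. The paper argues directly on the trails: it first shows that two distinct maximal zig-zag trails cannot share an arc, via a propagation argument (a common arc $a_p=b_q$ forces $a_{p+1}=b_{q+1}$ and $a_{p-1}=b_{q-1}$, since otherwise some in- or out-degree of $N$ would exceed two, and iterating this identifies the two trails), and then notes that every arc lies in some maximal trail because the one-arc trail ${\it tail}(a)>{\it head}(a)$ extends to a maximal one by finiteness. You instead encode the binary hypothesis once and for all in the auxiliary graph $H$ on $A(N)$ and invoke the standard decomposition of a maximum-degree-two graph into paths and cycles. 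Both proofs rest on the identical observation --- each arc has at most one head-partner and at most one tail-partner, and simplicity keeps these distinct --- but your formulation buys several things: the crown/fence classification and the forced alternation of head- and tail-sharing drop out of the path-versus-cycle dichotomy for free, the $O(|A(N)|)$ computability of the decomposition (Proposition~\ref{prop:preprocessing.running.time}) becomes transparent, and the argument visibly uses only the bounds ${\it deg}^-_N(v)\le 2$ and ${\it deg}^+_N(v)\le 2$, so it extends verbatim to the almost-binary networks of Section~\ref{sec:non-binary}. The paper's version is more self-contained, verifying disjointness and covering separately without an auxiliary object. One cosmetic point in your uniqueness step: you speak of $a$ being an ``interior arc of the path $Z$,'' but when $Z$ is a crown its arc sequence closes into a cycle of $H$, so every arc of $Z$ already has both of its $H$-neighbours inside $Z$ and an external neighbour $a'$ is impossible outright; this case deserves one explicit sentence, though it is not a gap.
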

\begin{proof} 
The proof is divided into two parts. We first claim that  $A(Z_i)\cap A(Z_j)=\emptyset$ holds for any $i, j \in [1,\ell]$ with $i\neq j$. Suppose, on the contrary to our claim, that $N$ contains two distinct maximal zig-zag trails $Z_i: (a_1,\dots, a_{p-1}, a_p, a_{p+1}, \dots, a_I)$ and $Z_j: (b_1,\dots, b_{q-1}, b_q, b_{q+1}, \dots, b_J)$ such that $a_p=b_q$ holds (\textit{i.e.}, the $p$-th arc $a_p$ of $Z_i$ and the $q$-th arc $b_q$ of $Z_j$ are the same arc of $N$). It follows that $a_{p+1}=b_{q+1}$ holds because if they were different arcs of $N$, then the in-degree of ${\it head}(a_p)$ ($= {\it head}(b_q)$) or out-degree of ${\it tail}(a_p)$ ($= {\it tail}(b_q)$) in $N$ would be greater than two, which contradicts the assumption that $N$ is binary.  By the same reasoning, $a_{p-1}= b_{q-1}$ holds. Repeating this argument yields  $a_1=b_1, \dots, a_p=b_q,\dots, a_I=b_J$ (note that $p=q$ and  $I-p=J-q$ hold by the maximality of $Z_i$ and $Z_j$), but this contradicts the assumption that $Z_i$ and $Z_j$ are distinct.  Thus, the above claim follows. 

Next, we will prove that  for any $a\in A(N)$, there exists a unique element $Z_i$ of $\mathcal{Z}$  with $a\in A(Z_i)$. For any $a\in A(N)$, there exists an obvious  zig-zag trail $Z$ in $N$, that is,  $Z: {\it tail}(a)>{\it head}(a)$. Then, because $N$ is finite, there exists a maximal one $Z_i\in \mathcal{Z}$ with $Z\subseteq Z_i$. By using the first claim, we can conclude that such $Z_i$ is uniquely determined by $a$. 
This completes the proof. 
\end{proof}

\begin{figure}[htbp]
\centering
\includegraphics[scale=.66]{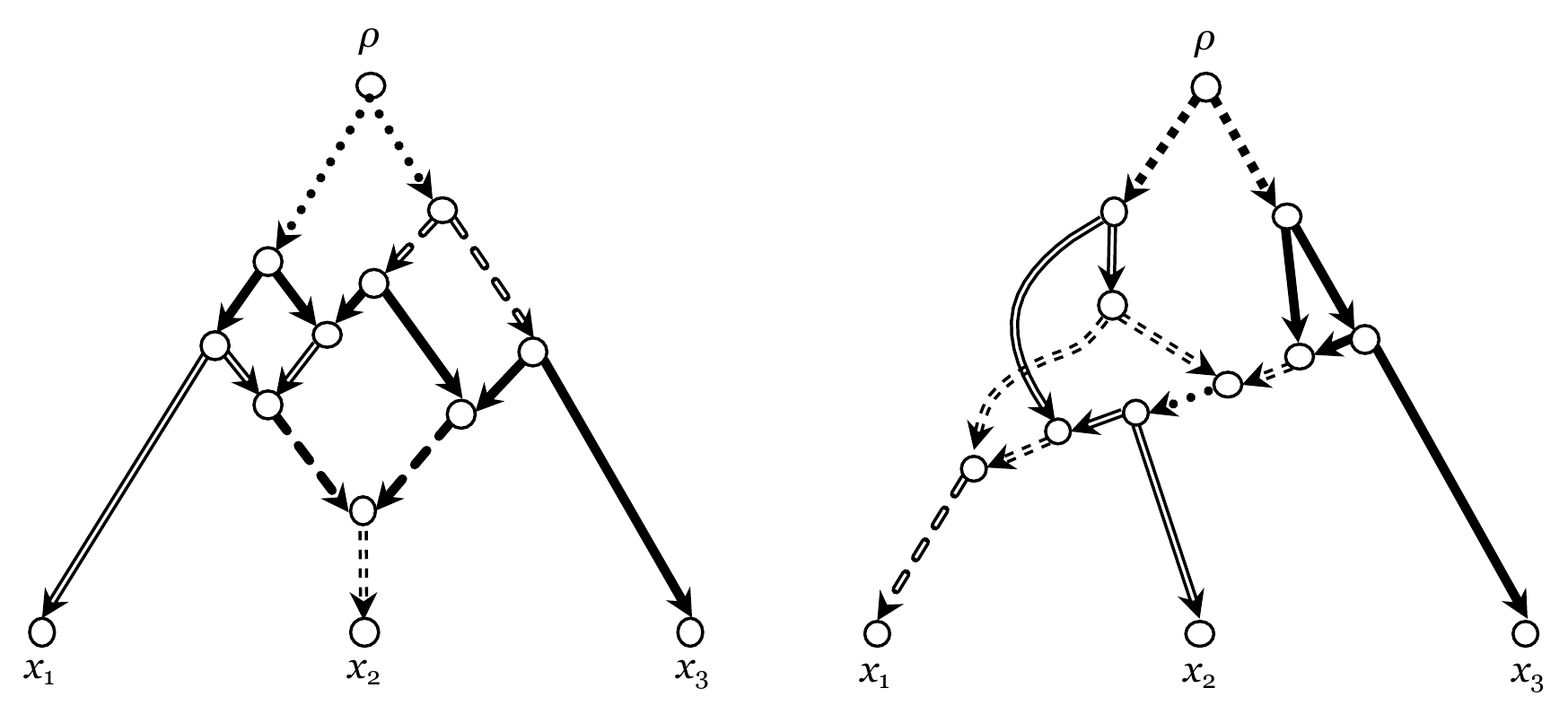}
\caption{Examples of maximal zig-zag trail decomposition as in Theorem~\ref{uniquely.decomposable}, where different types of lines are used to highlight distinct maximal zig-zag trails in each rooted binary $X$-phylogenetic network with $X=\{x_1,x_2,x_3\}$. The  network on the left is decomposed into 3 maximal M-fences, 2 maximal N-fences, and 1 maximal W-fence. On the right is the maximal zig-zag trail decomposition of a network discussed in Figure 3 of \cite{FS}, where the M-fence isomorphic to the one  in Figure~\ref{fig:rem-Mfence} is shown in bold solid arrows. 
\label{fig:demo}}
\end{figure}

\subsection{Decomposition-based characterizations of tree-based phylogenetic networks and new proofs of known results}\label{subsec:characterization.tbn.known.results}
As mentioned before, the problem of characterizing tree based phylogenetic networks has been intensively studied in the past. Although this topic is not a main focus of the present paper, we briefly discuss it for its relevance to Problem~\ref{prob:decision}. As we will explain, our decomposition-based characterizations of tree based phylogenetic networks (Lemma~\ref{iff} and Corollay~\ref{cor:characterization.tbn})  provide alternative proofs of some results in \cite{newchara2018, Jetten, LX}, thus helping the reader relate our work to previous research in the literature (for a more comprehensive list of relevant results, see \cite{newchara2018}).

Let us recall Theorem~\ref{thm:bijection} due to Francis and Steel \cite{FS} that characterizes tree-based phylogenetic networks in terms of the existence of  an admissible arc-set. The following lemma gives a decomposition-based analog of that characterization. 

\begin{lemma}\label{iff}
Let $N$ be a rooted binary phylogenetic $X$-network  
and $\mathcal{Z}=\{Z_1,\dots, Z_\ell\}$ be the maximal zig-zag trail decomposition of $N$. 
Then, $S\subseteq A(N)$ is an  admissible subset of $A(N)$ if and only if $S_i:=  S\cap A(Z_i)$ is an admissible subset of $A(Z_i)$ for each $i\in [1,\ell]$. 
\end{lemma}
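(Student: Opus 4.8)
The plan is to reduce the whole equivalence to a single structural observation: in a binary network, any two distinct arcs that share a head, or share a tail, must lie in the same maximal zig-zag trail. I would first isolate this as an auxiliary claim. Given distinct $a_1, a_2 \in A(N)$ with ${\it head}(a_1)={\it head}(a_2)$, the two-arc subgraph on $\{a_1,a_2\}$ is by definition a zig-zag trail in $N$, with the permutation $(a_1,a_2)$ witnessing the head-coincidence. Since $N$ is finite, this trail extends to a maximal zig-zag trail $Z^\ast$, and by the uniqueness part of Theorem~\ref{uniquely.decomposable} every arc belongs to exactly one member of $\mathcal{Z}$; hence $Z^\ast$ is simultaneously the unique trail containing $a_1$ and the unique trail containing $a_2$, so these coincide. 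The argument for a shared tail is symmetric. Consequently, for any arc pair relevant to conditions C1 or C2, both arcs sit inside a single $A(Z_i)$, and therefore membership in $S$ coincides with membership in $S_i$ for that pair.

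For the forward implication, I would argue that restriction to a subgraph can only weaken the admissibility conditions. Since $A(Z_i)\subseteq A(N)$ and the degree thresholds in C0 are always measured in $N$, any $(u,v)\in A(Z_i)$ triggering C0 also triggers it in $A(N)$, whence $(u,v)\in S$ and thus $(u,v)\in S\cap A(Z_i)=S_i$. For C1 and C2, any pair of arcs of $A(Z_i)$ sharing a head (respectively a tail) is in particular such a pair in $A(N)$; applying the corresponding condition for $S$, and noting that the two arcs lie in $A(Z_i)$ so that ``in $S$'' coincides with ``in $S_i$'', yields the condition for $S_i$. This direction therefore needs nothing beyond $A(Z_i)\subseteq A(N)$.

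For the backward implication, I would use that $\mathcal{Z}$ partitions $A(N)$, so that $S=\bigcup_{i} S_i$. For C0, an arc $(u,v)$ meeting the degree threshold lies in a unique $A(Z_i)$, where C0 for $S_i$ forces $(u,v)\in S_i\subseteq S$. For C1 and C2, I would invoke the auxiliary claim: a pair $a_1,a_2\in A(N)$ sharing a head (respectively a tail) lies entirely in one $A(Z_i)$, so the ``exactly one'' (respectively ``at least one'') condition for $S_i$ transfers verbatim to $S$, again because membership in $S_i$ and in $S$ agree on $A(Z_i)$. Assembling C0, C1, and C2 then gives admissibility of $S$.

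The only nontrivial point, and the one I would treat most carefully, is the auxiliary claim; everything else is bookkeeping once the arc pairs are confined to single trails. That claim is exactly where binarity enters, since it guarantees that at most two arcs meet at any head or tail, so a shared head or tail genuinely realizes an adjacency step in a zig-zag trail, and it is also where the uniqueness in Theorem~\ref{uniquely.decomposable} is used to pin the two arcs to the same $Z_i$.
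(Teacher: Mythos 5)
Your proof is correct and follows essentially the same route as the paper's: both hinge on the observation that two arcs sharing a head or a tail themselves form a zig-zag trail and hence, by the uniqueness in Theorem~\ref{uniquely.decomposable}, lie in a single maximal trail $Z_i$, after which C0--C2 localize by bookkeeping over the partition $\{A(Z_1),\dots,A(Z_\ell)\}$. The only (harmless) difference is that the paper additionally restates C0 with degrees measured inside $Z_i$ rather than in $N$, invoking maximality to show the two formulations agree, whereas you keep the degrees in $N$ exactly as in Definition~\ref{dfn:admissible}.
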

\begin{proof}
Our goal is to prove that $S$ satisfies the conditions C0, C1, and C2 in Definition~\ref{dfn:admissible} if and only if  for  each $i\in[1,\ell]$, the following C0$^\prime$, C1$^\prime$, and C2$^\prime$ hold:
\begin{description}
    \item [C0$^\prime$] $S_i$ contains all $(u,v)\in A(Z_i)$ with ${\it deg}^-_{Z_i}(v)=1$ or ${\it deg}^+_{Z_i}(u)=1$; 
    \item [C1$^\prime$] for any $a_1, a_2 \in  A(Z_i)$ with ${\it head}(a_1)={\it head}(a_2)$, exactly one of $\{a_1, a_2\}$ is in $S_i$;
    \item [C2$^\prime$] for any  $a_1, a_2 \in  A(Z_i)$ with ${\it tail}(a_1)={\it tail}(a_2)$, at least one of $\{a_1, a_2\}$ is in $S_i$. 
  \end{description}
If  $(u,v)\in A(N)$ satisfies ${\it deg}^-_N(v)=1$ (or ${\it deg}^+_N(v)=1$), then there exists a unique element $Z_i$ of $\mathcal{Z}$ with $(u, v)\in A(Z_i)$ and ${\it deg}^-_{Z_i}(v)=1$ (or   ${\it deg}^+_{Z_i}(v)=1$) by   Theorem~\ref{uniquely.decomposable}. The converse also holds as  $Z_i$ would not be maximal otherwise.  
Theorem~\ref{uniquely.decomposable} also implies that $S$ is partitioned into $S_1,\dots,S_\ell$ (note that no element of $S$ is empty).  Thus,  we can assert that $S$ satisfies C0 if and only if  C0$^\prime$ holds  for each $i\in[1,\ell]$. 
By similar reasoning,  we can deduce that  $\{a_1, a_2\}\subseteq A(N)$ satisfies  ${\it head}(a_1)={\it head}(a_2)$  if and only if there exists a unique element $Z_i$ of $\mathcal{Z}$ such that $\{a_1, a_2\}\subseteq A(Z_i)$ has the same property. Recalling that $ \{S_1,\dots,S_\ell\}$ is a partition of $S$, we have  $S\cap \{a_1,a_2\} = S_i \cap \{a_1,a_2\}$ for any $i\in [1,\ell]$ and any $a_1,a_2 \in A(Z_i)$ with ${\it head}(a_1)={\it head}(a_2)$. Hence,   $S$ satisfies C1  if and only if  C1$^\prime$ holds for each $i\in[1,\ell]$. The same arguments derive the desired conclusion regarding C2 and  C2$^\prime$. This completes the proof. 
\end{proof}

Notice that Lemma~\ref{iff} already indicates an advantage of the structure theorem. In fact, the lemma greatly simplifies the discussion on subdivision trees of $N$ because it says that what we need to consider is only the admissible arc-sets within each substructure $Z_i$ of $N$, rather than the admissible arc-sets in the entire network $N$.

Lemma~\ref{iff} also allows us to look at some different results in the literature from a unified and coherent viewpoint. Indeed, Lemma~\ref{iff} immediately gives a characterization of tree-based phylogenetic networks  that is equivalent to those that were independently obtained by  Zhang \cite{LX} and  by Jetten \cite{Jetten} in a matching-based approach, as we now explain. For the convenience of the reader, below we restate the relevant definitions and results from \cite{Jetten, LX} (for a fuller summary, see Subsection 3.1 in \cite{pons}). For a rooted binary phylogenetic $X$-network $N$, a reticulation vertex of $N$ is called \emph{type~0} if its parents  are both reticulation vertices and \emph{type~1} if one parent is reticulation vertex and the other is a tree vertex \cite{LX}. Also, a non-leaf vertex of $N$ is called \emph{omnian} if its children are all reticulation vertices \cite{JettenOLD, Jetten}. 

\begin{theorem}[Theorem 1 in \cite{LX}]\label{thm:zhang}
	A rooted binary phylogenetic $X$-network $N$ is tree-based if and only if (i) there is no type 0 reticulation vertex in $N$ and (ii) no two type 1 vertices are connected by a zigzag path.
\end{theorem}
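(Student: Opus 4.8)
The plan is to derive Zhang's characterization from Lemma~\ref{iff} by classifying the maximal zig-zag trails of $N$ into the four types (crown, N-fence, W-fence, M-fence) according to whether or not they admit an admissible subset. By Theorem~\ref{thm:bijection} and Lemma~\ref{iff}, $N$ is tree-based if and only if, in the canonical decomposition $\mathcal{Z}=\{Z_1,\dots,Z_\ell\}$ of Theorem~\ref{uniquely.decomposable}, every $Z_i$ has an admissible subset $S_i\subseteq A(Z_i)$. So I would first determine, type by type, when such an $S_i$ exists.

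To do this I would restate the admissibility conditions at the level of a single trail, as in the proof of Lemma~\ref{iff}: exactly one of the two arcs meeting at each valley (common head) must lie in $S_i$; at least one of the two arcs meeting at each peak (common tail) must lie in $S_i$; and every arc incident with a degree-one endpoint of $Z_i$ is forced into $S_i$. The key structural facts are that a valley is a reticulation vertex, a peak is a tree vertex, a tail-endpoint has out-degree one (hence is a reticulation or the root), and a head-endpoint has in-degree one (hence is a tree vertex or a leaf). For a crown there are no endpoints and the exactly-one/at-least-one constraints are simultaneously satisfiable around the even cycle; for an N-fence and for an M-fence the forced end arcs either propagate consistently or leave enough freedom to build some $S_i$. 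For a W-fence, however, both endpoints are tails, so both end arcs are forced into $S_i$; propagating the exactly-one rule at the valleys inward from one end forces the arcs to alternate membership, and since a W-fence has an even number of arcs this parity conflicts with the forced arc at the opposite end. Hence a W-fence admits no admissible subset while the other three types always do, which together with the first paragraph yields the intermediate characterization: $N$ is tree-based if and only if $\mathcal{Z}$ contains no W-fence.

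Finally I would match "no W-fence" to conditions (i) and (ii). Assuming the root has out-degree two so that it is never a tail-endpoint, both endpoints of any W-fence are reticulation vertices. In the shortest case $v_0>v_1<v_2$ the unique valley $v_1$ has both parents $v_0,v_2$ equal to reticulation endpoints, so $v_1$ is a type-0 reticulation; conversely, a type-0 reticulation together with its two in-arcs is already a maximal zig-zag trail, because each of its reticulation parents has out-degree one and cannot be extended, so it is a length-two W-fence. For a W-fence of length at least four, the valleys $v_1$ and $v_{m-1}$ next to the endpoints each have one reticulation parent (the endpoint) and one tree-vertex parent (the adjacent peak), hence are type-1 reticulations joined by the zig-zag path lying between them; conversely, two type-1 reticulations joined by a zig-zag path lie in a common maximal trail by Theorem~\ref{uniquely.decomposable}, and its two outer ends are precisely their reticulation parents, which are tail-endpoints, so that trail is a W-fence. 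Combining the two cases, $\mathcal{Z}$ has no W-fence exactly when $N$ has no type-0 reticulation and no two type-1 reticulations joined by a zig-zag path, which is Theorem~\ref{thm:zhang}.

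The step I expect to be most delicate is this last correspondence. One must pin down which arcs at a type-1 reticulation the ``zig-zag path'' of \cite{LX} is allowed to use, so that the reticulation appears as a valley adjacent to an endpoint (through its two in-arcs) rather than as a tail-endpoint of a different trail (through its out-arc); and one must handle the root with care, since a root of out-degree one could a priori be a tail-endpoint and create a W-fence that the type-0/type-1 dichotomy does not detect. Verifying the admissibility classification uniformly over all trail lengths, including the atypical fences of Remark~\ref{rem:atypical} --- chiefly the parity argument ruling out W-fences and the explicit construction for M-fences --- is routine but should be carried out in full.
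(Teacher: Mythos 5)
Your proposal is correct and follows essentially the same route as the paper: it reduces the statement to the decomposition-based characterization (tree-based if and only if no maximal zig-zag trail is a W-fence, i.e., Corollary~\ref{cor:characterization.tbn}), established exactly as in the paper via Theorem~\ref{thm:bijection}, Lemma~\ref{iff}, and the type-by-type analysis of admissible subsets including the parity argument for W-fences. The only difference is that you spell out the final translation between W-fences and Zhang's type-0/type-1 conditions, which the paper asserts without detail; the corner cases you flag (a root of out-degree one as a potential tail-endpoint, and the precise meaning of ``zigzag path'' in \cite{LX}) are real but benign --- in particular, an out-degree-one root's unique child cannot be a reticulation by acyclicity, so that arc always forms a single-arc N-fence and never an endpoint arc of a W-fence.
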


\begin{theorem}[Corollary 2.11 in \cite{Jetten}]\label{thm:jetten.characterization}
	 A rooted binary phylogenetic $X$-network $N$ is tree-based if and only if it contains no zig-zag path $(o_1, r_1, \dots, o_k, r_k, o_{k+1})$, with $k \geq 1$, in which $r_1, \dots, r_k$ are reticulations, $o_1, \dots, o_{k+1}$ are omnians and $o_1$ and $o_{k+1}$ are reticulations as well as omnians.
\end{theorem}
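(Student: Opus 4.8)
The plan is to reduce Jetten's characterization to the author's own decomposition-based one, namely that $N$ is tree-based if and only if the maximal zig-zag trail decomposition $\mathcal{Z}=\{Z_1,\dots,Z_\ell\}$ of $N$ (Theorem~\ref{uniquely.decomposable}) contains no W-fence (Corollary~\ref{cor:characterization.tbn}). Granting this, it suffices to exhibit a correspondence between the forbidden zig-zag paths $(o_1,r_1,\dots,o_k,r_k,o_{k+1})$ appearing in the statement and the maximal W-fences of $N$. I would then prove both directions at once through the chain $N$ contains a forbidden path $\iff$ $N$ contains a maximal W-fence $\iff$ $N$ is not tree-based.

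First I would record why only W-fences are obstructions, as this is what links the two pictures. By Theorem~\ref{thm:bijection} and Lemma~\ref{iff}, $N$ is tree-based exactly when every $Z_i\in\mathcal{Z}$ carries an admissible subset $S_i$. In any fence both boundary arcs are forced into $S_i$ by C0, since each endpoint has degree one in $Z_i$. Propagating inward, each internal valley imposes ``exactly one'' (C1) and so flips membership, while each internal peak imposes ``at least one'' (C2) and is readily satisfied; following this rule along the trail, the forced end-values turn out to be compatible for crowns, N-fences and M-fences, but contradictory for W-fences, where the membership propagated from one boundary arc clashes with the membership forced at the other. Hence a maximal trail admits no admissible subset precisely when it is a W-fence, which is the characterization I want to invoke.

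The heart of the argument is then the structural identification. Writing a maximal W-fence as $v_0>v_1<v_2>\cdots<v_{m-2}>v_{m-1}<v_m$ with $m=2k$, its valleys $v_1,v_3,\dots,v_{m-1}$ each have in-degree two in $Z_i$, hence in-degree two in the binary network $N$, so they are reticulations and become $r_1,\dots,r_k$. Its internal peaks $v_2,v_4,\dots,v_{m-2}$ have out-degree two with both children being the adjacent valleys, so all of their children are reticulations and they are omnians; these become $o_2,\dots,o_k$. For the endpoints $v_0,v_m$, maximality is decisive: since the trail cannot be extended and the neighbouring valley already uses both its in-arcs, an endpoint can have no second out-arc, so (arguing exactly as in the converse half of the proof of Lemma~\ref{iff}) its out-degree in $N$ is one; a non-leaf, non-root binary vertex of out-degree one is a reticulation whose unique child is the adjacent valley, so $v_0,v_m$ are simultaneously reticulations and omnians, matching the special role of $o_1,o_{k+1}$. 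Conversely, the vertex types in a forbidden path force the arc directions to alternate as $o_1>r_1<o_2>\cdots<o_{k+1}$ (a child relation in the wrong orientation would break the zig-zag), yielding a W-fence, while the reticulation endpoints make it maximal by the same degree-and-binarity argument.

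The main obstacle I anticipate is this endpoint analysis, i.e.\ the equivalence ``peak endpoint of a maximal W-fence $\iff$ reticulation that is also an omnian'', which is exactly where maximality and binarity must be combined, and where the potential exception of an out-degree-one root (an omnian that is not a reticulation) must be excluded or handled separately. Once the endpoint bookkeeping is settled, matching the remaining vertices and the index ranges ($k\ge 1 \iff m\ge 2$) is routine, and the two chains of equivalences above complete the proof.
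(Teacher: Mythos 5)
Your overall route is the right one and is, in substance, what the paper intends: the paper does not actually prove this statement---it cites it from Jetten and van Iersel and merely asserts that it is ``equivalent'' to Corollary~\ref{cor:characterization.tbn}, whose own proof rests on Lemma~\ref{iff}. Your proposal supplies exactly the missing dictionary between Jetten's forbidden zig-zag paths and maximal W-fences, and the vertex-by-vertex identification (valleys are reticulations since they have in-degree two, internal peaks are omnians since both of their children are valleys, and maximality corresponds to saturation of the relevant in- and out-degrees) is sound in both directions; likewise your argument that a forbidden path is automatically a \emph{maximal} trail, because its endpoint reticulations have out-degree one and its interior vertices have all incident arcs of the relevant kind already in the path.

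The one point you flag but leave open---that a peak endpoint of a maximal W-fence might be the root with out-degree one, which would be an omnian but not a reticulation and hence would not produce Jetten's forbidden path---does need to be closed, and it closes by acyclicity. If $\rho$ has out-degree $1$ with unique child $u$, then every vertex other than $\rho$ is reachable from $\rho$ only through $u$ (the root is the unique in-degree-$0$ vertex of a finite acyclic digraph, so everything is reachable from it); a second parent of $u$ would therefore be a descendant of $u$ and would create a cycle, so $u$ has in-degree $1$ and the adjacent ``valley'' needed for a W-fence cannot exist at $\rho$. A root of out-degree $2$ sits as an internal peak of its maximal trail, so it is excluded as an endpoint as well. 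With that observation inserted, both chains of equivalences are complete, and the remaining bookkeeping ($m=2k$, and the forced $0$--$1$ propagation showing that W-fences are precisely the maximal trails admitting no admissible subset) agrees with what the paper establishes in Lemma~\ref{iff}, Corollary~\ref{cor:characterization.tbn}, and the proof of Theorem~\ref{structure}.
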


In the language of this paper, the above two results are equivalent to the following. 
\begin{corollary}\label{cor:characterization.tbn}
	Let $N$ be a rooted binary phylogenetic $X$-network  and  $\mathcal{Z}=\{Z_1,\dots,Z_\ell\}$  be the maximal zig-zag trail decomposition of $N$.  Then, $N$ is a tree-based phylogenetic network on $X$ if and only if no element $Z_i\in \mathcal{Z}$ is a W-fence.
\end{corollary}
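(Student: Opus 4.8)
The plan is to combine the two results already available into a clean per-trail reduction, and then to settle a single local question: which of the four types of maximal zig-zag trail admits an admissible arc-set. First I would invoke Theorem~\ref{thm:bijection} to restate tree-basedness as the existence of an admissible subset $S\subseteq A(N)$, and then apply Lemma~\ref{iff} together with the fact (from Theorem~\ref{uniquely.decomposable}) that $\{A(Z_1),\dots,A(Z_\ell)\}$ is a partition of $A(N)$. This yields the equivalence: $N$ is tree-based if and only if every $Z_i$ possesses an admissible subset $S_i\subseteq A(Z_i)$. The forward direction is immediate by restricting $S$; for the converse one glues independently chosen $S_i$ into $S:=\bigcup_i S_i$, whose restriction to each $Z_i$ is exactly $S_i$, so Lemma~\ref{iff} certifies that $S$ is admissible. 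Thus the corollary reduces to the purely local claim that a maximal zig-zag trail $Z$ admits an admissible subset if and only if $Z$ is not a W-fence.

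To prove this local claim I would first translate the admissibility conditions into constraints along the trail. Reading off the orientations, each internal vertex of $Z$ is either a shared head (a reticulation, giving an \emph{H-junction}) or a shared tail (a tree vertex, giving a \emph{T-junction}), and these junctions alternate along the trail. Condition C1 then says that at every H-junction exactly one of the two incident arcs lies in $S$, while C2 says that at every T-junction at least one does. The role of C0 is to pin down the two ends: by maximality a tail-endpoint has out-degree $1$ and a head-endpoint has in-degree $1$ in $N$, so in either case C0 forces the arc incident to an endpoint into $S$.

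With this dictionary the four types separate cleanly. For an N-fence the two endpoints have opposite orientations and the junction pattern lets the alternating selection $S=\{a_i : i \text{ odd}\}$ satisfy every constraint; for a crown there are no forced endpoints and the even length makes an alternating selection consistent around the cycle; for an M-fence both endpoints are head-endpoints, but the junctions adjacent to them are the flexible T-junctions, so a suitable alternating choice again works. The decisive case is the W-fence: here both endpoints are tail-endpoints and the junctions adjacent to them are H-junctions. Starting from the forced first arc $a_1\in S$, the exactly-one constraint at the first H-junction forces $a_2\notin S$, the following T-junction then forces $a_3\in S$, and so on; since the junctions alternate H, T, \dots, H and the length $m$ is even, this chain of forced choices makes $a_m\notin S$, contradicting the fact that C0 forces the other tail-endpoint arc $a_m$ into $S$. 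Hence a W-fence admits no admissible subset, which completes the claim and therefore the corollary.

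I expect the main obstacle to be the bookkeeping in the W-fence case: one must check that the propagation really is forced at every step, so that no admissible choice can escape the contradiction, and this hinges on the precise interleaving of the rigid H-junctions with the flexible T-junctions and on the parity of $m$. The companion constructions for the other three types are routine once the junction/forcing dictionary is set up, so the bulk of the care goes into the W-fence and into confirming the endpoint orientations on which C0 acts.
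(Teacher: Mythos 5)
Your proposal is correct and follows essentially the same route as the paper: reduce via Theorem~\ref{thm:bijection} and Lemma~\ref{iff} to the local question of which maximal zig-zag trails admit an admissible subset, then check the four types. In fact you supply more detail than the paper's own terse proof of the corollary (which simply asserts the local claim, deferring the positive constructions to the proof of Theorem~\ref{structure}), and your forced-propagation argument for the W-fence and the maximality-based endpoint degree observation are exactly the facts the paper relies on implicitly.
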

\begin{proof}
Recalling that an admissible subset of $A(Z_i)$ is defined by the conditions \textbf{C0}$^\prime$, \textbf{C1}$^\prime$, and \textbf{C2}$^\prime$, we can see that there exists an admissible subset of $A(Z_i)$ if and only if $Z_i$ is not a W-fence. Then, by Theorem~\ref{thm:bijection} and Lemma~\ref{iff}, maximal W-fences are the only forbidden substructure for tree-based phylogenetic networks. This completes the proof.
\end{proof}

One may wonder why maximal W-fences are forbidden in tree-based phylogenetic networks.  
As it is instructive to see what happens  if there is a maximal W-fence, we next recall one of the characterizations of tree-based phylogenetic networks in terms of path partitions and antichains that were obtained by Francis~\textit{et~al.}~in \cite{newchara2018}, and give an alternative proof of their result from the perspective of this paper. 

\begin{theorem}[Theorem 2.1 (I), (III) in \cite{newchara2018}]\label{thm:characterization.disjoint.paths}
Let $N$ be a rooted binary phylogenetic $X$-network. Then, the following are equivalent:
\begin{itemize}
	\item $N$ is tree-based;
	\item for any subset $V^\prime$ of $V(N)$, there exists a set $\mathcal{P}$ of vertex disjoint (directed) paths in $N$ each ending at a leaf in $X$ such that each element of $V^\prime$ is on exactly one path in $\mathcal{P}$.
\end{itemize}
\end{theorem}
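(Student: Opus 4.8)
The plan is to prove the two implications separately, using the maximal zig-zag trail decomposition --- specifically Corollary~\ref{cor:characterization.tbn}, which tells us that $N$ fails to be tree-based precisely when one of its maximal zig-zag trails is a W-fence --- as the bridge between the combinatorial ``no W-fence'' condition and the path-partition condition. For the forward implication I would work directly with a subdivision tree, and for the converse I would argue by contraposition and extract a covering obstruction from a W-fence. The genuinely new content, and the step I expect to be the crux, is the converse.

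For the direction ``tree-based $\Rightarrow$ path condition'', I would fix any subdivision tree $T$ of $N$ (Definition~\ref{dfn:subdiv}). Since $T$ is spanning we have $V(T)=V(N)$, and since $T$ is a subdivision of a binary phylogenetic $X$-tree, every vertex of $T$ has out-degree at most two and the set of leaves of $T$ is exactly $X$. I would then invoke the standard fact that such a rooted tree can be partitioned into vertex-disjoint directed paths, one ending at each leaf, by extending, at every out-degree-$2$ vertex, the incoming path along one child and starting a fresh path along the other. These are directed paths of $N$ (as $T\subseteq N$), each ends at a leaf in $X$, and together they cover all of $V(N)$; hence they cover any prescribed $V'\subseteq V(N)$, with each vertex of $V'$ on exactly one path by vertex-disjointness. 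This direction needs only the existence of a subdivision tree, not the full decomposition.

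For the converse I would prove the contrapositive: if $N$ is not tree-based then some choice of $V'$ admits no such path system. By Corollary~\ref{cor:characterization.tbn}, $N$ contains a maximal W-fence $Z\colon v_0>v_1<v_2>\cdots>v_{m-1}<v_m$ with $m$ even. I would first record two structural facts, both forced by maximality and binarity: the endpoints $v_0,v_m$ have out-degree $1$ in $N$, and each interior tree vertex $v_{2j}$ has its \emph{two} out-arcs inside $Z$, pointing to the reticulations $v_{2j-1}$ and $v_{2j+1}$. Consequently the ``upper'' vertices $v_0,v_2,\dots,v_m$ (there are $m/2+1$ of them) have all of their children among the $m/2$ reticulations $v_1,v_3,\dots,v_{m-1}$ of $Z$. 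I would then take $V'=V(Z)$ and observe that any covering path system routes each upper vertex --- which is not a leaf, hence is not the terminal vertex of its path --- to one of these reticulations as its successor; vertex-disjointness makes the assignment ``upper vertex $\mapsto$ successor reticulation'' injective. Since $m/2+1>m/2$, the pigeonhole principle forbids such an injection, so $V(Z)$ cannot be covered, and the contrapositive follows.

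The main obstacle is the converse, and within it the care needed to justify the two structural facts about the W-fence --- the out-degree-$1$ endpoints and the in-trail placement of the tree vertices' out-arcs --- and to verify that every upper vertex is genuinely forced to hand its path down to a reticulation of $Z$, i.e.\ that it can neither terminate a path (it is not a leaf) nor escape to a child outside $\{v_1,v_3,\dots,v_{m-1}\}$. Once these are in place the pigeonhole count is immediate, and it is precisely here that the structure theorem (Theorem~\ref{uniquely.decomposable}), through Corollary~\ref{cor:characterization.tbn}, does the essential work by guaranteeing a W-fence to exploit.
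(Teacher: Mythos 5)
Your proposal is correct and follows essentially the same route as the paper: the forward direction decomposes a subdivision tree into vertex-disjoint leaf-terminated paths, and the converse argues by contraposition, extracting a maximal W-fence via Corollary~\ref{cor:characterization.tbn} and deriving a counting contradiction from the fact that the $m/2+1$ tail vertices must each hand off to one of only $m/2$ reticulations. The only cosmetic differences are that the paper takes $V'$ to be the set of tail vertices rather than all of $V(Z)$ and phrases the pigeonhole step as a per-path inequality $|V(P)\cap V_{\it even}|\leq |V(P)\cap V_{\it odd}|$ summed over the paths.
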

\begin{proof}
Assume that $N$ is not tree-based. Then, by  Corollary~\ref{cor:characterization.tbn}, $N$ has no maximal W-fence,  which is denoted by $Z: v_0 > v_1 < v_2 > v_3 < \cdots < v_{m-2} > v_{m-1} < v_m$. 
If we partition the set $V(Z)=\{v_0,\dots, v_m\}$ into $V_{\it even}:=\{v_0, v_2, \dots, v_{m-2}, v_m\}$ and  $V_{\it odd}:=\{v_1, v_3, \dots, v_{m-3}, v_{m-1}\}$ (recall that $m$ is even),   
it is obvious that  $V_{\it even}$ is a subset of $V(N)$ and  that $|V_{\it even}|>|V_{\it odd}|$ holds. Then, we claim that $N$ does not have a set $\mathcal{P}$ of vertex-disjoint paths each ending at a different leaf in $X$ such that each element of $V_{\it even}$ is on exactly one path. Suppose, on the contrary, there exists such a set $\mathcal{P}$ of paths. Then, for each path $P$ in $\mathcal{P}$,  the following two statements hold: 1) once $P$ passes through a vertex $v$ in $V_{\it even}$, $P$ must go further through a vertex in $V_{\it odd}$ adjacent to $v$ (because otherwise $P$ cannot go outside of $Z$ and therefore cannot reach a leaf in $X$); 2) $P$ cannot pass through the same element of $V_{\it even}$ twice or more before ending at a leaf in $X$ (because otherwise $P$ would not be a path in $N$). Then, for each $P$ in $\mathcal{P}$, $|V(P)\cap V_{\it even}|\leq |V(P)\cap V_{\it odd}|$ holds. Taking the union over all $P$, we have $|V_{\it even}|\leq |V_{\it odd}|$, which is a contradiction. Hence, the above claim indeed holds, so we can conclude that if $N$ is not tree-based then $N$ does not satisfy the second condition in Theorem~\ref{thm:characterization.disjoint.paths}.	
To prove the converse, assume that $N$ is tree-based. Notice that we do not need to consider every subset $V^\prime$ of $V(N)$ but only need to show that there exists a set $\mathcal{P}$ of vertex disjoint paths in $N$ each ending at a leaf in $X$ such that each element of $V(N)$ is on exactly one path in $\mathcal{P}$. As can be seen easily, $\mathcal{P}$ can be constructed from a subdivision tree $T$ of $N$ by removing one of the arcs outgoing from each branching point of $T$. This completes the proof.
\end{proof}

 \subsection{Characterization of the set of subdivision trees}\label{subsec:characterization.set.subdivision.trees}
From now on, let us consider an ordered set $(Z_1,\dots, Z_\ell)$ of  maximal zig-zag trails in $N$ so that we can identify a subgraph $G$ of $N$ with a direct product  $\prod_{i\in[1,\ell]}{A(G)\cap A(Z_i)}$. In addition,  for each $i\in [1,\ell]$, we represent  the set $A(Z_i)$ using a sequence $(a_1,\dots,a_{m_i})$ of the elements of $A(Z_i)$  that form the zig-zag trail in this order, where $m_i:=|A(Z_i)|$. This allows us to  encode an arbitrary subset of $A(Z_i)$ using a 0-1 sequence of length $m_i$; for example, given $A(Z_i)=(a_1, a_2, a_3, a_4, a_5)$, we can specify the subset $\{a_2, a_4, a_5\}\subseteq A(Z_i)$ by the sequence $\langle 0\, 1\, 0\, 1\, 1\rangle=\langle (01)^2 1 \rangle$. For each $i\in [1,\ell]$, let $\mathcal{S}(Z_i)$ be  a family of subsets of $A(Z_i)$ that can be expressed as follows.   
  
\begin{align}\label{sequence}
\mathcal{S}(Z_i):= 
  \begin{cases}
      \bigl\{ \langle (10)^{m_i/2}\rangle,\ \langle (01)^{m_i/2} \rangle\bigr\} & \text{if $Z_i$ is a crown;} \\
      \bigl\{ \langle 1(01)^{(m_i-1)/2}\rangle \bigr\}  & \text{if $Z_i$ is an N-fence;} \\ 
    \bigl\{ \langle 1(01)^p(10)^q 1 \rangle \mid  p,q\in \mathbb{Z}_{\geq 0}, p+q=(m_i-2)/2\bigr\} & \text{if $Z_i$ is an M-fence.
    } 
   \end{cases}
\end{align}

Note that by virtue of symmetry, the above sequence representation does not depend on the direction in which the arcs of $Z_i$ are ordered.
For example, when $Z_i$ is an N-fence, the sequence $\langle 1(01)^{(m_i-1)/2}\rangle$ and its reverse ordering are identical.

\begin{theorem}[\textbf{Characterization of the set of subdivision trees  of a tree-based phylogenetic network}]\label{structure}
Let $N$ be a tree-based phylogenetic network on $X$  and  $\mathcal{Z}=\{Z_1,\dots,Z_\ell\}$  be the maximal zig-zag trail decomposition of $N$.  Then,  the collection $\mathcal{T}$
 of subdivision trees of $N$ are characterized by 
 \begin{align}\label{eq:subdivision.trees.characterization}
\mathcal{T}=\prod_{i\in[1,\ell]}{\mathcal{S}(Z_i)},
\end{align} 
where $\mathcal{S}(Z_i)$ is defined in the equation~$(\ref{sequence})$. 
\end{theorem}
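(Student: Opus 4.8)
The plan is to reduce the statement to a purely local question about admissible subsets and then dispatch the three surviving trail types by hand. By Theorem~\ref{thm:bijection}, the arc-sets of subdivision trees of $N$ are exactly the admissible subsets of $A(N)$, so under the identification of a subgraph $G$ with $\prod_{i\in[1,\ell]}\bigl(A(G)\cap A(Z_i)\bigr)$ it suffices to show that the family of admissible subsets of $A(N)$ equals $\prod_{i\in[1,\ell]}\mathcal{S}(Z_i)$. Lemma~\ref{iff} supplies the product structure for free: a subset $S\subseteq A(N)$ is admissible if and only if every $S_i:=S\cap A(Z_i)$ is an admissible subset of $A(Z_i)$, so the map $S\mapsto(S_1,\dots,S_\ell)$ is a bijection from the admissible subsets of $A(N)$ onto $\prod_{i\in[1,\ell]}\mathcal{A}(Z_i)$, where $\mathcal{A}(Z_i)$ denotes the family of admissible subsets of $A(Z_i)$. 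Since $N$ is tree-based, Corollary~\ref{cor:characterization.tbn} guarantees that no $Z_i$ is a W-fence, so each $Z_i$ is a crown, an N-fence, or an M-fence, matching the three cases of~\eqref{sequence}. The whole theorem therefore follows once I verify $\mathcal{A}(Z_i)=\mathcal{S}(Z_i)$ in each of these three cases.

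For each case I would fix the arc sequence $(a_1,\dots,a_{m_i})$ in zig-zag order and rewrite the conditions C0$'$, C1$'$, C2$'$ of Lemma~\ref{iff} as constraints on the $0$-$1$ sequence encoding a subset: at a vertex where two heads meet, C1$'$ is an exclusive-or on the two incident arcs; at a vertex where two tails meet, C2$'$ is an inclusive-or; and C0$'$ forces membership of any arc incident to a degree-one endpoint of $Z_i$. The N-fence case is then immediate: the two endpoints $v_0,v_m$ force $a_1$ and $a_{m_i}$ into $S$, and propagating alternately through the exclusive-or constraints at the head-meeting vertices and the inclusive-or constraints at the tail-meeting vertices forces the membership of every remaining arc, leaving the unique sequence $\langle1(01)^{(m_i-1)/2}\rangle$. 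For a crown no endpoint exists, so C0$'$ is vacuous; here the exclusive-or and inclusive-or constraints together propagate around the cycle to force strict alternation, which (as $m_i$ is even) is consistent and admits exactly the two complementary sequences $\langle(10)^{m_i/2}\rangle$ and $\langle(01)^{m_i/2}\rangle$.

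The M-fence case is the main obstacle, since it is the only type exhibiting genuine freedom, and it is where the family $\mathcal{S}(Z_i)$ has more than one element. Here both endpoints are heads of single arcs, so C0$'$ forces $a_1,a_{m_i}\in S$; the interior head-meeting vertices impose exclusive-or constraints on $\{a_2,a_3\},\{a_4,a_5\},\dots,\{a_{m_i-2},a_{m_i-1}\}$, and the peaks impose inclusive-or constraints on $\{a_1,a_2\},\{a_3,a_4\},\dots,\{a_{m_i-1},a_{m_i}\}$. The key insight I would exploit is that the exclusive-or constraints express each odd-indexed interior arc as the complement of its even-indexed neighbour, after which the inclusive-or constraints collapse to the single requirement that the subsequence $a_2,a_4,\dots,a_{m_i-2}$ be non-decreasing. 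The $m_i/2$ monotone $0$-$1$ sequences of that length correspond bijectively to the position $p\in\{0,\dots,(m_i-2)/2\}$ at which the subsequence switches from $0$ to $1$, and translating back through the complementation recovers precisely the sequences $\langle1(01)^p(10)^q1\rangle$ with $p+q=(m_i-2)/2$; a direct check confirms each such sequence satisfies all constraints, so $\mathcal{A}(Z_i)=\mathcal{S}(Z_i)$. Combining the three verifications with the product bijection of the first paragraph yields $\mathcal{T}=\prod_{i\in[1,\ell]}\mathcal{S}(Z_i)$, completing the proof.
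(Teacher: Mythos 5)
Your proposal is correct and follows essentially the same route as the paper: reduce to admissible subsets via Theorem~\ref{thm:bijection}, localize to each maximal zig-zag trail via Lemma~\ref{iff} (with Corollary~\ref{cor:characterization.tbn} ruling out W-fences), and then verify that the family of admissible subsets of $A(Z_i)$ equals $\mathcal{S}(Z_i)$ by a case analysis on the trail type, with the crown and N-fence cases handled by the same constraint propagation the paper uses. The only genuine divergence is the M-fence case: the paper proves the claim by induction on $|A(Z_i)|$, whereas you eliminate the odd-indexed interior variables through the exclusive-or constraints and observe that the remaining inclusive-or constraints reduce to monotonicity of $(x_2,x_4,\dots,x_{m_i-2})$, so the $m_i/2$ solutions are the monotone $0$-$1$ sequences of length $(m_i-2)/2$. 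This substitution argument is a valid and arguably more transparent alternative to the induction, since it exhibits the parameter $p$ in $\langle 1(01)^p(10)^q1\rangle$ directly as the switching position of the monotone subsequence; the paper's induction, in exchange, avoids having to set up the change of variables explicitly.
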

\begin{proof}
Let $Z$ be an arbitrary element of $\mathcal{Z}$  
with  $A(Z)=(a_1, \dots,  a_m)$. Using the conditions C0$^\prime$, C1$^\prime$, and C2$^\prime$ in the proof of Lemma~\ref{iff}, we now enumerate all 0-1 sequences $\langle x_1\, \dots\, x_m\rangle$ corresponding to the admissible subsets  of $A(Z)$  in the following  four cases (see also Figure~\ref{fig:enumerate}).   

When $Z$ is a crown $v_0 > v_1 < v_2 > v_3 < \cdots  < v_{m-2} > v_{m-1} < v_{m} = v_0$, 
the condition C0$^\prime$ does not apply. Repeated application of the conditions C1$^\prime$ and C2$^\prime$ derives the only solution $\langle 1\, 0\, 1\, 0\dots 1\, 0\rangle$ from $x_1=1$. Similarly, $x_1=0$ implies $\langle 0\, 1\, 0\, 1\, \dots 0\, 1\rangle$. 
This proves that a family  of all admissible subsets of $A(Z)$ is given by $\mathcal{S}(Z)=\bigl\{ \langle (10)^{m/2}\rangle,\ \langle (01)^{m/2} \rangle\bigr\}$. 

When $Z$ is an N-fence $v_0 > v_1 < v_2 > v_3 < \cdots >  v_{m-2} < v_{m-1} > v_m$, 
exactly one of its endpoints $v_0$ is a reticulation vertex of $N$. 
Then, the condition C0$^\prime$ gives  $x_1=1$, which implies $\langle 1\, 0\, 1\, 0\, \dots 0\, 1\rangle$ as in the previous case. 
This proves that $\langle 1(01)^{(m-1)/2}\rangle$ is the only  admissible subset of $A(Z)$.

When $Z$ is an M-fence $v_0 < v_1 > v_2 < v_3 > \cdots > v_{m-2} < v_{m-1} > v_m$, we have $x_1=1$ and $x_m=1$ again but the other values   $x_2,\,  \dots,\,  x_{m-2}$ are left undetermined. 
We claim that a family of all admissible subsets of $A(Z)$ with $|A(Z)|=m\geq 2$ is given by $\mathcal{S}(Z)=\bigl\{ \langle 1(01)^p(10)^q 1 \rangle \mid  p,q\in \mathbb{Z}_{\geq 0}, p+q=(m-2)/2\bigr\}$. 
The proof is by induction on the length $m$ (recall that $m$ is even).  
The assertion is trivial for $m=2$. We consider  the  two cases according to the value of $x_{m+1}$ in the sequence $\langle 1\ x_2\ \dots\ x_{m+1}\ 1\rangle$ of length $m+2$. 
When $x_{m+1}=1$ holds,  $\langle 1 (01)^{m/2}\, 1\rangle = \langle 1 (01)^{m/2} (10)^0\, 1\rangle$  is the only admissible subset  of $A(Z)$ having this form. 
When $x_{m+1}=0$ holds, this only implies  $\langle 1\, x_2\, \dots\ x_{m-1}\, 1\, 0\, 1\rangle$. By the induction hypothesis,  the family of admissible subsets having this form  consists of the sequences $\langle 1(01)^p (10)^q 1\rangle$ with $p\in  \mathbb{Z}_{\geq 0}$, $q\in \mathbb{N}$, and $p+q=m/2$. This proves the claim.

Thus, whenever $\mathcal{S}(Z)$ is defined, $\mathcal{S}(Z)$ is identical to the family of admissible subsets of $A(Z)$.
This completes the proof.
\end{proof}

\begin{figure}[htbp]
\centering
\includegraphics[scale=.7]{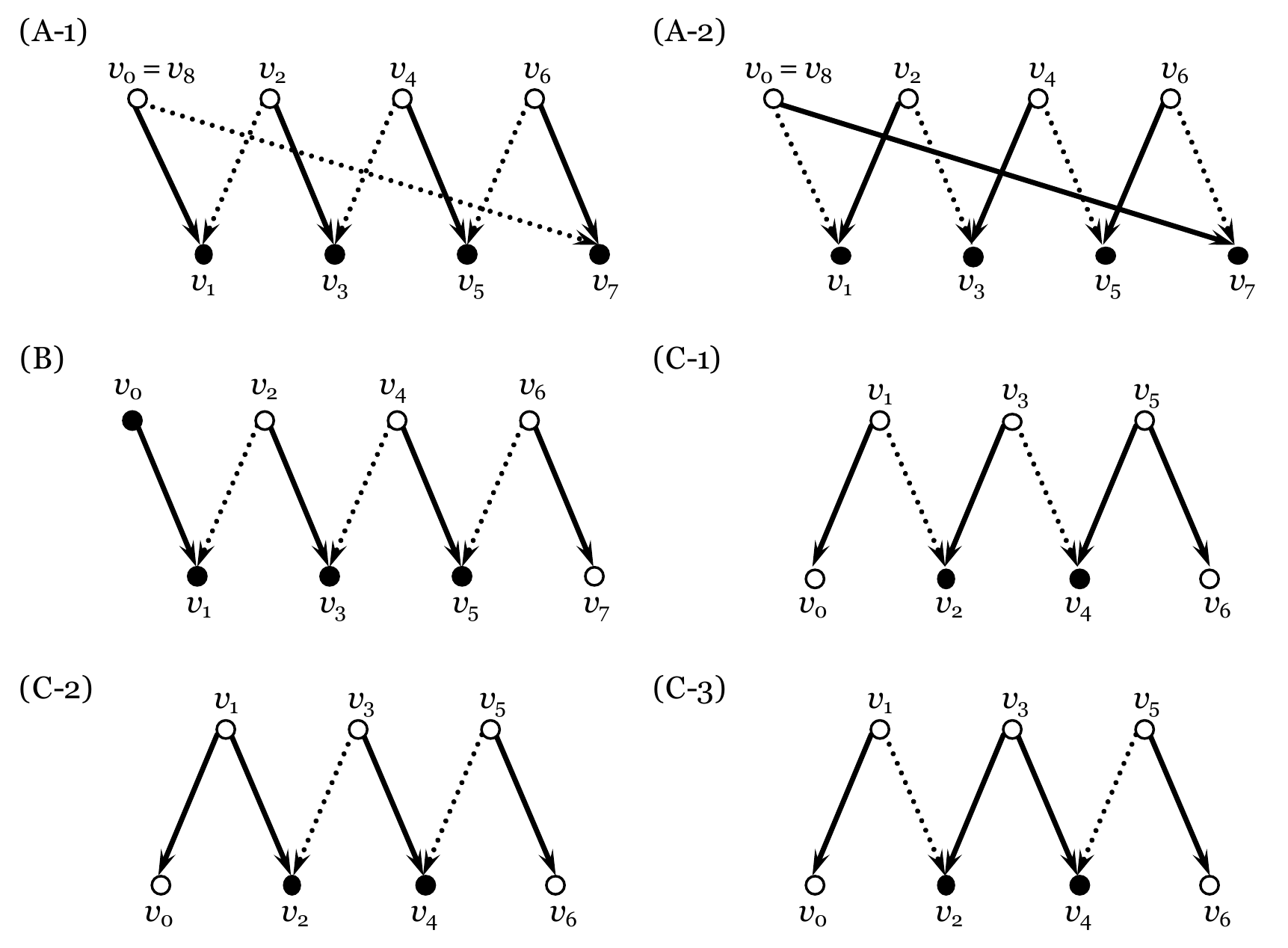}
\caption{Illustrative examples for the proof of Theorem~\ref{structure}.  
The arcs of $Z$ labeled ``0'' are shown in dotted lines   while solid lines indicate those labeled ``1''. 
The vertices of $Z$ are colored in black  if they are reticulation vertices of $N$ and  in white otherwise. (A-1) and (A-2) describe the two  admissible arc-sets  of a crown with 8 arcs that are expressed as $\langle (10)^{4}\rangle$ and $\langle (01)^{4} \rangle$, respectively. (B) shows the only admissible arc-set  of a maximal N-fence with 7 arcs, $\langle 1(01)^{3} \rangle$. The other three are the   solutions for a maximal M-fence with 6 arcs, that is,
 $\langle 1(01)^2 1 \rangle$,  $\langle 1(10)^2 1 \rangle$,  $\langle 1(01)^1 (10)^1 1 \rangle$. 
\label{fig:enumerate}}
\end{figure}

\section{Algorithmic implications of the main results}\label{sec:implications}
As shown in this section, Theorem~\ref{structure} furnishes a series of linear time (and linear time delay) algorithms for the problems described in Section~\ref{sec:prob}. The algorithms given here all start with the pre-processing of decomposing the input network into maximal zig-zag trails (Algorithm~\ref{algm}). 

\begin{algorithm}
\caption{\textsc{Pre-processing} (Decompose a rooted binary phylogenetic $X$-network $N$ into $\ell\geq 1$ maximal zig-zag trails)\label{algm}}
\begin{algorithmic}
\Require{A rooted binary phylogenetic $X$-network $N$}
\Ensure{The maximal zig-zag trail decomposition $\mathcal{Z}$ of $N$}
\State{initialize $\mathcal{Z}:=\emptyset$,   $G:=N$}
\ForAll {arc $a\in A(G)$}
\State{compute a (unique) maximal zig-zag trail $Z$ in $G$ that contains arc $a$}
\State{update $\mathcal{Z}:=\mathcal{Z}\cup \{Z\}$,    $G:=G[A(G)\setminus A(Z)]$}
\EndFor
\State output $\mathcal{Z}$ and \textbf{halt} 
\end{algorithmic}
\end{algorithm}

\begin{proposition}\label{prop:preprocessing.running.time}
 For any  rooted binary phylogenetic $X$-network $N$, Algorithm~\ref{algm} can compute the maximal zig-zag trail decomposition  $\mathcal{Z}=\{Z_1,\dots,Z_\ell\}$  of $N$  in $O(|A(N)|)$ time. 
\end{proposition}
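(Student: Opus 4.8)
The plan is to show that the total running time of Algorithm~\ref{algm} is dominated by the cost of reconstructing the maximal zig-zag trails, and that this cost is linear because the trails partition $A(N)$. First I would fix a suitable representation of the input: since $N$ is binary, every vertex has in-degree at most $2$ and out-degree at most $2$, so storing for each vertex its (at most two) incoming and (at most two) outgoing arcs lets us access the arcs incident to any vertex in constant time. Building this adjacency structure takes $O(|A(N)|)$ time, using that $|V(N)|=O(|A(N)|)$. I would also maintain a boolean flag per arc recording whether it has already been assigned to a trail; this lets the outer loop skip in $O(1)$ time any arc that has already been removed from $G$, so that the phrase ``\textbf{ForAll} arc $a\in A(G)$'' is implemented without re-scanning.

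Next I would analyze the inner step, namely computing the unique maximal zig-zag trail $Z$ containing a given unassigned arc $a$. By Theorem~\ref{uniquely.decomposable} such a $Z$ exists and is unique, so the task is only to reconstruct it. I would grow $Z$ from $a$ by walking outward in both directions. Because $N$ is binary, three arcs can share neither a common head nor a common tail, and a single arc has only one head and one tail; consequently a maximal zig-zag trail alternates between gluing consecutive arcs at a shared head and at a shared tail. Hence, given the frontier arc together with the type of junction it makes with its predecessor, the next arc must make the opposite type of junction, so it is the unique remaining arc sharing the relevant endpoint of the frontier arc, and it is located in $O(1)$ time using the adjacency structure. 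The walk terminates when no further extension exists, or when it returns to the starting arc (the crown case); at each step it appends exactly one new arc of $A(Z)$ and flags it as assigned. Therefore computing $Z$ takes $O(|A(Z)|)$ time.

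The main point requiring care is the amortized accounting for the outer loop interacting with the shrinking graph $G$. Because the maximal zig-zag trails form a decomposition of $N$ by Theorem~\ref{uniquely.decomposable}, each arc of $N$ belongs to exactly one trail and is therefore visited exactly once during a trail-growing walk and flagged thereafter; any subsequent encounter of that arc in the outer loop costs only $O(1)$ to skip. Summing the cost of the inner step over all produced trails gives $\sum_{i\in[1,\ell]} O(|A(Z_i)|) = O\bigl(\sum_{i\in[1,\ell]} |A(Z_i)|\bigr) = O(|A(N)|)$, where the final equality uses that $\{A(Z_1),\dots,A(Z_\ell)\}$ partitions $A(N)$. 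Combined with the $O(|A(N)|)$ cost of the preprocessing described above, this yields the claimed $O(|A(N)|)$ bound and completes the argument.
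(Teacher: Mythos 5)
Your proposal is correct and follows essentially the same approach as the paper, whose proof is simply the observation that each arc of $N$ is visited exactly once; you have merely fleshed out the implementation details (adjacency structure, the $O(1)$ extension step justified by binarity, and the amortized accounting over the partition $\{A(Z_1),\dots,A(Z_\ell)\}$) that the paper leaves implicit.
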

\begin{proof}
In Algorithm~\ref{algm}, each arc of $N$ is visited exactly once. Therefore, the above decomposition $\mathcal{Z}$ of $N$ can be obtained in $O(|A(N)|)$ time. This completes the proof.
\end{proof}

\subsection{Linear time algorithm for the decision/search problems} 
Corollary~\ref{cor:characterization.tbn} gives the obvious algorithm that determines whether or not $N$ is tree-based by checking whether or not $N$ has a maximal W-fence. Also, Theorem~\ref{structure} yields the simple algorithm that selects an arbitrary element of the set $\mathcal{T}$ to retrieve a subdivision tree of $N$. Putting them together, we get an algorithm for solving  Problem~\ref{prob:decision}  described in Algorithm~\ref{algm:decision.search}.

\begin{corollary}
\label{thm:decision.search}
	Algorithm~\ref{algm:decision.search} can solve Problem~\ref{prob:decision} in $O(|A(N)|)$ time.
\end{corollary}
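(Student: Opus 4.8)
The plan is to prove correctness and the time bound separately, relying on the three ingredients already in hand: the pre-processing of Algorithm~\ref{algm} together with its linear running time (Proposition~\ref{prop:preprocessing.running.time}), the forbidden-substructure characterization of Corollary~\ref{cor:characterization.tbn}, and the product description of the set of subdivision trees (Theorem~\ref{structure}). The overall structure of Algorithm~\ref{algm:decision.search} is to first compute the decomposition, then decide tree-basedness by searching for a W-fence, and finally, if none is found, assemble a subdivision tree trail-by-trail.

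First I would invoke Proposition~\ref{prop:preprocessing.running.time} to obtain the maximal zig-zag trail decomposition $\mathcal{Z}=\{Z_1,\dots,Z_\ell\}$ in $O(|A(N)|)$ time. For the decision part, I would classify each $Z_i$ into one of the four types as (or immediately after) it is built: a trail is a crown precisely when it is closed, and among the open fences the three subtypes are distinguished by the parity of $m_i=|A(Z_i)|$ together with the orientation of the two arcs incident to the endpoints $v_0$ and $v_m$ (a W-fence being the even-length fence whose endpoints are both tails of their unique incident arcs, as opposed to an M-fence, whose endpoints are both heads). Each such test inspects only the endpoints and the length of $Z_i$, so it costs $O(|A(Z_i)|)$, indeed $O(1)$ if the endpoints are recorded during the construction of $Z_i$. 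By Corollary~\ref{cor:characterization.tbn}, $N$ is tree-based if and only if none of the $Z_i$ is a W-fence, so a single scan over the classified trails answers the decision question.

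For the search part, suppose no $Z_i$ is a W-fence. Then for each $i$ the family $\mathcal{S}(Z_i)$ in $(\ref{sequence})$ is nonempty, and by Theorem~\ref{structure} any choice of one member $S_i\in\mathcal{S}(Z_i)$ per trail yields, upon setting $S:=\bigcup_{i\in[1,\ell]} S_i$, an admissible subset of $A(N)$ whose induced subgraph $N[S]$ is a subdivision tree of $N$. Concretely, I would read off the canonical sequence $\langle(10)^{m_i/2}\rangle$, $\langle 1(01)^{(m_i-1)/2}\rangle$, or $\langle 1(01)^{(m_i-2)/2}1\rangle$ according to whether $Z_i$ is a crown, an N-fence, or an M-fence, and mark the corresponding arcs; writing down one such sequence and recording its arcs costs $O(|A(Z_i)|)$.

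Finally, for the total running time I would use that $\{A(Z_1),\dots,A(Z_\ell)\}$ is a partition of $A(N)$ (Theorem~\ref{uniquely.decomposable}), so $\sum_{i\in[1,\ell]}|A(Z_i)|=|A(N)|$; hence both the classification sweep and the construction of $S$ run in $O(|A(N)|)$, and adding the $O(|A(N)|)$ pre-processing cost leaves the bound unchanged. The only point requiring care is the claim that the type of a trail --- in particular separating a W-fence from an M-fence --- can be read off locally from its endpoints and its length; but this is immediate from the definitions of the four trail types, so I do not anticipate a genuine obstacle, and the argument reduces to bookkeeping once the decomposition is available.
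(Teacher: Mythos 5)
Your proposal is correct and follows essentially the same route as the paper's proof: decompose via Proposition~\ref{prop:preprocessing.running.time}, classify each trail in $O(|A(Z_i)|)$ time, reject on a W-fence by Corollary~\ref{cor:characterization.tbn}, otherwise pick one sequence from each $\mathcal{S}(Z_i)$ and assemble the tree by Theorem~\ref{structure}, with the total bounded by $\sum_i |A(Z_i)| = |A(N)|$. The extra detail you supply on distinguishing W-fences from M-fences by endpoint orientation is a correct elaboration of a step the paper leaves implicit.
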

\begin{proof}
By Proposition~\ref{prop:preprocessing.running.time},  the maximal zig-zag trail decomposition $\mathcal{Z}$ of $N$ can be obtained in $O(|A(N)|)$ time. For each $Z_i\in \mathcal{Z}$, one can determine in $O(|A(Z_i)|)$ time whether $Z_i$ is a crown, M-fence, N-fence, or W-fence. In the case when $Z_i$ is not a W-fence, selecting an arbitrary sequence $\sigma_i$ in $\mathcal{S}(Z_i)$ and converting $\sigma_i$ into $A_i$ requires $O(1)$ time and $O(|A(Z_i)|)$ time, respectively. It takes $O(A(N))$ time to construct $T$. Hence, Algorithm~\ref{algm:decision.search} can solve Problem~\ref{prob:decision} in $O(|A(N)|)$ time. 
\end{proof}

\begin{remark}\label{rem:theta.running.time}
Algorithm~\ref{algm:decision.search} is optimal in terms of time complexity (\textit{i.e.}, it achieves the best possible running time) because it requires $\Omega(|A(N)|)$ time just to read $N$ as an input. Thus, Corollary~\ref{thm:decision.search} implies that Problem~\ref{prob:decision} can be solved in $\Theta(|A(N)|)$ time. Similar arguments apply to the other algorithms in this paper.
\end{remark}

\begin{algorithm}
\caption{\textsc{Decision/search}  (Problem~\ref{prob:decision}) \label{algm:decision.search}}
\begin{algorithmic}
\Require{A rooted binary phylogenetic $X$-network $N$}
\Ensure{A subdivision tree $T$ of $N$ if $N$ is tree-based and ``No'' otherwise}
\State{call \textsc{Pre-processing} (Algorithm~\ref{algm}) to obtain $\mathcal{Z}=\{Z_1,\dots, Z_\ell\}$}
\State initialize $\sigma_1, \dots, \sigma_\ell, A_1,\dots, A_\ell := \emptyset$
\ForAll {maximal zig-zag trail $Z_i\in \mathcal{Z}$}
\If{$Z_i$ is a W-fence}
\State output ``No'' and \textbf{halt} 
\Comment Corollary~\ref{cor:characterization.tbn}
\Else
\State{$\sigma_i:=$ an arbitrary sequence in $\mathcal{S}(Z_i)$ in the equation~(\ref{sequence})}
\State $A_i:=$ the set of arcs of $N$ that are specified by  the 1s in $\sigma_i$ 
\EndIf
\EndFor
\State $T:=(V(N), A_1\cup \dots \cup A_\ell)$
\Comment Theorem~\ref{structure}
\State output $T$ and \textbf{halt}
\end{algorithmic}
\end{algorithm}

\subsection{Linear time algorithm for the deviation quantification problem}\label{subsec:deviation.quantification}
Even though a rooted binary phylogenetic $X$-network $N$ is not tree-based if there is a W-fence in the maximal zig-zag trail decomposition $\mathcal{Z}$ of $N$, if we create a rooted binary phylogenetic $(X\cup \{y\})$-network $N^\prime$ by  attaching a new leaf $y$ to one of the arcs of the W-fence, then the W-fence in $N$ becomes two N-fences in $N^\prime$. Thus, any rooted binary phylogenetic $X$-network can be made tree-based by introducing the same number of new leaves as the number of W-fences (the same conclusion was obtained in Theorem~2.11 in \cite{JettenOLD} by a different argument). Moreover,   because  each additional leaf can only  break one W-fence, it is also necessary to attach the above number of new leaves in order to eliminate all W-fences. Hence, we obtain the following.

\begin{corollary}\label{thm:deviation.equals.W-fences}
Let $N$ be a rooted binary phylogenetic $X$-network, $\delta(N)\geq 0$ be the minimum number of leaves that need to be attached to make $N$ a tree-based phylogenetic network on $X$, and $\Delta$ be the number of W-fences in the maximal zig-zag trail decomposition $\mathcal{Z}$ of $N$. Then, we have $\delta(N)=\Delta$. 
\end{corollary}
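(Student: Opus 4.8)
The plan is to establish the two inequalities $\delta(N)\le\Delta$ and $\delta(N)\ge\Delta$ separately, each time combining Corollary~\ref{cor:characterization.tbn} (a network is tree-based precisely when its maximal zig-zag trail decomposition contains no W-fence) with a local analysis of how attaching a single leaf---that is, subdividing one arc $a=(u,v)$ into $(u,x),(x,v)$ and adding a pendant arc $(x,y)$ with $y$ a new leaf---interacts with the decomposition of Theorem~\ref{uniquely.decomposable}. The first fact I would record, by a direct check of the definitions in the spirit of the endpoint analysis in the proof of Theorem~\ref{structure}, is that both endpoints of a W-fence are reticulation vertices of $N$ (they are the ``tail-endpoints'' at which the trail cannot be extended because the relevant out-degree equals $1$), whereas an N-fence has exactly one such reticulation endpoint. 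This endpoint-and-parity bookkeeping drives both directions.

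For the upper bound I would attach exactly one leaf inside each W-fence. The key local claim is that subdividing \emph{any} arc of a W-fence $Z:v_0>v_1<\cdots<v_m$ (so $m$ is even) and hanging a new leaf $y$ at the subdivision vertex $x$ splits $Z$ into two N-fences: the pendant arc $(x,y)$ shares its tail $x$ with exactly one of the two pieces of the subdivided arc, and reorganizing the trail accordingly produces two fences of odd length, each keeping one of the two original reticulation endpoints (or the new tree vertex $x$, or the new leaf $y$) as an endpoint, hence each an N-fence (possibly the degenerate single-arc one, when an end arc is subdivided). A short case analysis distinguishing subdivision of an end arc from subdivision of an interior arc makes this mechanical. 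Since the $\Delta$ W-fences are pairwise arc-disjoint by Theorem~\ref{uniquely.decomposable}, and since subdividing an arc alters only the trail containing that arc, these $\Delta$ attachments are independent and leave every other trail untouched; the resulting network $N'$ therefore has no W-fence and is tree-based by Corollary~\ref{cor:characterization.tbn}, giving $\delta(N)\le\Delta$.

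For the lower bound I would prove the complementary local claim: attaching a leaf to an arc lying \emph{outside} a given W-fence $Z_i$ leaves $Z_i$ unchanged as a maximal W-fence. The point is that subdividing an external arc $b=(p,q)$ preserves the out-degree of $p$ and the in-degree of $q$, hence changes none of the degrees at the endpoints of $Z_i$ that govern whether the trail can be extended; moreover each newly created arc meets $V(Z_i)$ (if at all) only at a vertex where it presents a head while $Z_i$ presents a tail there, or vice versa, so no new arc can ever be appended to $Z_i$. Consequently, to destroy a W-fence one must subdivide at least one of its own arcs, i.e.\ attach at least one leaf inside it. As the W-fences are pairwise arc-disjoint, any sequence of attachments producing a tree-based $N'$ must spend at least one distinct leaf on each of the $\Delta$ W-fences, whence $\delta(N)\ge\Delta$; combining the two inequalities yields $\delta(N)=\Delta$.

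I expect the main obstacle to be the lower bound, specifically turning the ``one leaf breaks at most one W-fence'' intuition into a rigorous statement valid under an \emph{arbitrary} sequence of attachments. The delicate step is the degree-invariance argument above: one must verify that no arc created by an external attachment can be appended to an untouched W-fence (this is exactly where the head-versus-tail incidence at the shared vertex is used, together with the fact that interior reticulations and interior tree vertices of a W-fence already have both of their matching arcs inside the trail by binarity), and that this robustness persists across many attachments, which I would handle by a short induction on the number of external subdivisions. By contrast, the upper bound reduces to a direct and essentially routine local computation.
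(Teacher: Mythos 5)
Your argument is correct and takes essentially the same route as the paper, whose own justification is the informal paragraph preceding the corollary: attaching a leaf to an arc of a W-fence turns it into two N-fences (sufficiency of $\Delta$ leaves, via Corollary~\ref{cor:characterization.tbn}), while each attached leaf can break at most one W-fence (necessity). Your write-up simply makes explicit the endpoint/degree bookkeeping and the arc-disjointness of the maximal trails from Theorem~\ref{uniquely.decomposable} that the paper leaves implicit.
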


Corollary~\ref{thm:deviation.equals.W-fences} yields a simple algorithm (Algorithm~\ref{algm:deviation.quantification}) that solves  Problem~\ref{prob:deviation} by counting the number $\Delta$ of maximal W-fences of $N$. As the running time of the algorithm is clearly $O(|A(N)|)$ by Proposition~\ref{prop:preprocessing.running.time}, we also obtain   Corollary~\ref{cor:deviation.quantification.complexity}. 

\begin{corollary}\label{cor:deviation.quantification.complexity}
	Algorithm~\ref{algm:deviation.quantification} can solve Problem~\ref{prob:deviation} in $O(|A(N)|)$ time.
\end{corollary}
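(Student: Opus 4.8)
The plan is to establish correctness and the running-time bound separately, both leaning heavily on results already in hand. For correctness, the key is Corollary~\ref{thm:deviation.equals.W-fences}, which identifies $\delta(N)$ with the number $\Delta$ of W-fences appearing in the maximal zig-zag trail decomposition $\mathcal{Z}$ of $N$. Since Algorithm~\ref{algm:deviation.quantification} is designed precisely to count the W-fences among $Z_1,\dots,Z_\ell$, its output equals $\Delta=\delta(N)$, so correctness is immediate and requires no further argument.

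For the running-time bound, I would decompose the work into two phases. First, the preprocessing step invokes Algorithm~\ref{algm}, which by Proposition~\ref{prop:preprocessing.running.time} produces $\mathcal{Z}=\{Z_1,\dots,Z_\ell\}$ in $O(|A(N)|)$ time. Second, for each trail $Z_i$ the algorithm must decide whether $Z_i$ is a W-fence; following the same reasoning as in the proof of Corollary~\ref{thm:decision.search}, this classification can be carried out in $O(|A(Z_i)|)$ time by reading off the alternating head/tail pattern along $Z_i$ and checking which of the four types (crown, N-fence, M-fence, W-fence) it matches.

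The final step is to sum these local costs. Because $\{A(Z_1),\dots,A(Z_\ell)\}$ is a partition of $A(N)$ by Theorem~\ref{uniquely.decomposable}, we have $\sum_{i=1}^{\ell} |A(Z_i)| = |A(N)|$, so the total classification cost telescopes to $O(|A(N)|)$; incrementing the W-fence counter contributes only $O(1)$ per trail, hence $O(\ell)=O(|A(N)|)$ overall. Adding the preprocessing cost yields the claimed $O(|A(N)|)$ bound.

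There is no substantive obstacle here: the statement is a corollary that merely packages Corollary~\ref{thm:deviation.equals.W-fences} (for correctness) together with Proposition~\ref{prop:preprocessing.running.time} (for the cost of the shared preprocessing phase). The only point requiring a moment's care is confirming that the per-trail W-fence test is genuinely linear in $|A(Z_i)|$ rather than, say, quadratic, but this is already implicit in the classification argument used for the decision/search algorithm. As noted in Remark~\ref{rem:theta.running.time}, the resulting bound is in fact optimal, since reading $N$ alone requires $\Omega(|A(N)|)$ time.
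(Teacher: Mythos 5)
Your proposal is correct and follows essentially the same route as the paper, which derives the bound directly from Proposition~\ref{prop:preprocessing.running.time} for the preprocessing and Corollary~\ref{thm:deviation.equals.W-fences} for correctness. Your version merely makes explicit the per-trail classification cost and the telescoping sum $\sum_i |A(Z_i)| = |A(N)|$, details the paper leaves implicit.
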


As in Remark~\ref{rem:theta.running.time}, the running time of Algorithm~\ref{algm:deviation.quantification} is the best possible and Problem~\ref{prob:deviation} can be solved in $\Theta(|A(N)|)$ time.

\begin{algorithm}
\caption{\textsc{Deviation quantification} (Problem~\ref{prob:deviation})\label{algm:deviation.quantification}}
\begin{algorithmic}
\Require{A rooted binary phylogenetic $X$-network $N$}
\Ensure{The minimum number $\delta(N)\geq 0$ of leaves that need to be attached to make $N$ tree-based}
\State{call \textsc{Pre-processing} (Algorithm~\ref{algm}) to obtain $\mathcal{Z}=\{Z_1,\dots, Z_\ell\}$}
\State initialize $\Delta:=0$
\ForAll {maximal zig-zag trail $Z_i\in \mathcal{Z}$}
\If{$Z_i$ is a W-fence}
\State update $\Delta:=\Delta+1$
\EndIf
\EndFor
\State $\delta(N):=\Delta$
\Comment Corollary~\ref{thm:deviation.equals.W-fences}
\State output $\delta(N)$ and \textbf{halt}
\end{algorithmic}
\end{algorithm}

\begin{remark}\label{rem:deviation.previous.method}
	By Corollary~\ref{cor:deviation.quantification.complexity}, we have improved the current best known bound on the time complexity of Problem~\ref{prob:deviation} that was shown in \cite{newchara2018}. In \cite{newchara2018}, Francis~\textit{et~al.} proposed a polynomial time algorithm for solving Problem~\ref{prob:deviation}, but the running time of their  algorithm has to be  $O(\sqrt{|2V(N)|}\cdot|A(N)|)=O(|V(N)|)^{3/2}$  because the algorithm is  derived from the equation $\delta(N)=|V(N)|-|X|-m(G_N)$ (Lemma~4.1 in \cite{newchara2018}), where  $m(G_N)$ denotes the maximum size of matchings in a bipartite graph $G_N$ whose vertex bipartition of $G_N$ is formed by two copies of $V(N)$ and whose edge-set contains an edge between $u \in V_1$ and  $v \in V_2$ precisely if $(u, v)$ is an arc of $N$. Thus, their method  involves the step of finding the maximum-sized matching in $G_N$, which has $2|V(N)|$ vertices and $|A(N)|$ edges and uses the Hopcroft-Karp algorithm \cite{hopcroft1973} for that purpose. In contrast, Algorithm~\ref{algm:deviation.quantification} requires only $O(|A(N)|)$ time because it simply decomposes $N$ into maximal zig-zag trails and checks how many maximal W-fences exist.
\end{remark}

	For the benefit of the interested reader, we mention other deviation indices described in \cite{newchara2018}.  
	Francis~\textit{et al.}~\cite{newchara2018} defined the following two  as well as $\delta(N)$:
	\begin{itemize}
	\item The minimum number $\ell(N)$ of leaves in $V(N) \setminus X$ that must be present as leaves in a rooted spanning tree of $N$
	\item The minimum number $p(N) = d(N) - |X|$, where $d(N)$ is the smallest number of vertex disjoint paths that partition the vertices of $N$
\end{itemize}
In \cite{newchara2018}, it was shown that $\delta(N)=\ell(N)=p(N)$ holds (Theorem 4.3 in \cite{newchara2018}). Then, it automatically follows that  Algorithm~\ref{algm:deviation.quantification} can compute $\ell(N)$ and $p(N)$ as well as $\delta(N)$   in $O(|A(N)|)$ time.

\subsection{Linear time algorithm for the counting problem} 
Combining Corollary~\ref{cor:characterization.tbn} that determines when the number of subdivision tree equals zero and Theorem~\ref{structure} that characterizes the set of subdivision trees of a tree-based phylogenetic network in the form of a direct product, we can immediately obtain the following.

\begin{corollary}\label{cor:formula}
Let $N$ be a rooted binary phylogenetic $X$-network that has $\alpha(N)\in \mathbb{Z}_{\geq 0}$ subdivision trees  and  $\mathcal{Z}=\{Z_1,\dots,Z_\ell\}$  be the maximal zig-zag trail decomposition of $N$.  Then,  
$\alpha(N)=\prod_{i=1}^{\ell}{\alpha(Z_i)}$ holds,  where  
\begin{align}\label{eq:counting}
  \alpha(Z_i) = 
  \begin{cases}
      0 & \text{if $Z_i$ is a W-fence;} \\
      1 & \text{if $Z_i$ is an N-fence;} \\ 
    2 & \text{if $Z_i$ is a crown;} \\
        |A(Z_i)|/2 & \text{if $Z_i$ is an M-fence.} \\
   \end{cases}
\end{align}
\end{corollary}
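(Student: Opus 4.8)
The plan is to treat this as a straightforward cardinality computation layered on top of the two results the statement already invokes, splitting on whether or not $N$ is tree-based. First I would record that, by definition, $\alpha(N)=|\mathcal{T}|$ is the number of subdivision trees of $N$ (equivalently, by the bijection of Theorem~\ref{thm:bijection}, the number of admissible subsets of $A(N)$). The whole corollary then reduces to evaluating $|\mathcal{T}|$ and matching it against the claimed product.

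For the case analysis I would first dispose of the degenerate case. Suppose $N$ is not tree-based; then by Corollary~\ref{cor:characterization.tbn} at least one $Z_i\in\mathcal{Z}$ is a W-fence, so the right-hand side $\prod_{i=1}^{\ell}\alpha(Z_i)$ contains the factor $\alpha(Z_i)=0$ and hence vanishes, while $\alpha(N)=0$ as well since $N$ has no subdivision tree. Thus both sides equal $0$ and the W-fence branch of equation~(\ref{eq:counting}) is accounted for. This is precisely the role of Corollary~\ref{cor:characterization.tbn} as the device that pins down when the count is zero.

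In the main case, where $N$ is tree-based, no $Z_i$ is a W-fence and Theorem~\ref{structure} applies verbatim, giving $\mathcal{T}=\prod_{i\in[1,\ell]}\mathcal{S}(Z_i)$. Taking cardinalities and using that the cardinality of a direct product is the product of the cardinalities of its factors, I obtain $\alpha(N)=|\mathcal{T}|=\prod_{i=1}^{\ell}|\mathcal{S}(Z_i)|$, so it remains only to check $\alpha(Z_i)=|\mathcal{S}(Z_i)|$ for the three non-W-fence types by inspecting equation~(\ref{sequence}). For a crown, $\mathcal{S}(Z_i)$ is the explicit two-element set $\bigl\{\langle(10)^{m_i/2}\rangle,\langle(01)^{m_i/2}\rangle\bigr\}$, so $|\mathcal{S}(Z_i)|=2$; for an N-fence it is the singleton $\bigl\{\langle 1(01)^{(m_i-1)/2}\rangle\bigr\}$, so $|\mathcal{S}(Z_i)|=1$; for an M-fence the set is indexed by the pairs $(p,q)$ with $p,q\in\mathbb{Z}_{\geq 0}$ and $p+q=(m_i-2)/2$.

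The only genuine counting is the M-fence tally. Here I would argue that the assignment $(p,q)\mapsto\langle 1(01)^p(10)^q 1\rangle$ is a bijection onto $\mathcal{S}(Z_i)$: injectivity is immediate because $p$ is recovered as the length of the initial maximal alternating block $(01)^p$, and then the number of admissible pairs is the number of lattice points on the segment $p+q=(m_i-2)/2$, namely $(m_i-2)/2+1=m_i/2=|A(Z_i)|/2$. I expect no real obstacle anywhere: once Theorem~\ref{structure} is in hand the corollary is essentially its transcription into cardinalities, and the M-fence count is the single elementary step that must be spelled out rather than read off directly.
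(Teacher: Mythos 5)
Your proof is correct and follows essentially the same route as the paper, which derives the corollary immediately by combining Corollary~\ref{cor:characterization.tbn} (for the W-fence/zero case) with the direct-product characterization of Theorem~\ref{structure} and reading off the cardinalities $|\mathcal{S}(Z_i)|$ from equation~(\ref{sequence}). Your explicit verification that $(p,q)\mapsto\langle 1(01)^p(10)^q1\rangle$ is injective, giving $(m_i-2)/2+1=|A(Z_i)|/2$ for M-fences, is the only step the paper leaves implicit, and it is carried out correctly.
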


For the benefit of the reader, let us look at the relevant results in \cite{JettenOLD, pons}  to see how our results helps understand  them. In~\cite{JettenOLD}, Jetten considered the problem of counting the number $\beta(N)$ of ``base trees'' of a tree-based phylogenetic network $N$ (see Section~\ref{sec:futurework} for more detail) and obtained the following upper bound. As mentioned  in Subsection~\ref{subsec:decomposition}, an \emph{omnian} is a non-leaf vertex whose children are all reticulation vertices. 

\begin{theorem}[Theorem 2.14 in \cite{JettenOLD}]\label{thm:jetten:count}
Let $N$ be a rooted binary tree-based phylogenetic network and  let $B_N$ be its associated bipartite graph with vertex bipartition $\{O, R\}$ and arc-set
$\{(o, r) : o \in O, r \in R, (o, r) \in A(N)\}$, where $O$ and $R$ denote the sets of omnians and reticulations  in $V(N)$, respectively.  Then, 	we have 
	\begin{align*}
		\beta(N)\leq 2^c\cdot \prod_{P\in \mathcal{P} (B_N)}{\frac{1}{2}(|V(P)|+3)}, 
	\end{align*}
where $\beta(N)$ is the number of base trees of $N$, $c$ is the number of cycle components in $B_N$, and $\mathcal{P} (B_N)$ is the set of path components in $B_N$ such that both terminal vertices are in $R$.
\end{theorem}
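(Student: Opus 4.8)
The plan is to derive the bound from our exact count of subdivision trees (Corollary~\ref{cor:formula}) via the elementary observation that $\beta(N)\le\alpha(N)$. Suppressing the degree-two vertices of a subdivision tree produces a unique base tree, and every base tree arises in this way, so the induced map from subdivision trees to base trees is surjective and hence $\beta(N)\le\alpha(N)$. It therefore suffices to show that the right-hand side of the asserted inequality is exactly $\alpha(N)$, re-expressed through the components of $B_N$; the bound is then strict precisely when distinct subdivision trees share a base tree, as in Figure~\ref{fig:crossovers}.

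The substance of the argument is a dictionary between the maximal zig-zag trail decomposition $\mathcal{Z}$ of $N$ and the connected components of $B_N$. First I would pin down, for a maximal zig-zag trail, which of its vertices are reticulations and which are omnians: an interior valley has in-degree two and is a reticulation, whereas a valley-type endpoint has in-degree one and is not; dually a peak is an omnian exactly when both of its children are (reticulation) valleys, so every interior peak of a crown, and every M-fence peak not adjacent to a valley-endpoint, is an omnian, while a peak next to a valley-endpoint is not. A delicate point is that a single vertex of $N$ may lie in $O\cap R$; this is exactly why $B_N$ must be read as a genuine bipartite graph, in which such a vertex contributes one copy on the $O$-side, carrying the edges to its reticulation children, and a separate copy on the $R$-side, carrying the edges from its omnian parents. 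Under this reading each $O$-vertex has degree equal to its number of reticulation children and each $R$-vertex has degree equal to its number of omnian parents, both at most two, so $B_N$ is a disjoint union of paths and cycles, and the two copies of an $O\cap R$ vertex fall into different components so that distinct trails are never spliced together.

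With these facts I would verify, one trail type at a time, that each $Z_i\in\mathcal{Z}$ accounts for exactly one component of $B_N$. A crown contributes a cycle; an N-fence of length at least three contributes a path with one endpoint in $O$ and one in $R$; an M-fence with $|A(Z_i)|=m\ge4$ contributes a path on the vertices $v_2,\dots,v_{m-2}$ whose two endpoints are reticulations, so that $|V(P)|=m-3$ (the case $m=4$ degenerating to a single $R$-vertex with $|V(P)|=1$); and the trivial trails, namely N-fences of length one and M-fences of length two, contribute nothing. Since no W-fence occurs in a tree-based network (Corollary~\ref{cor:characterization.tbn}), no component has both endpoints in $O$, and since each $B_N$-vertex copy lies in a single trail this assignment is a bijection onto the components of $B_N$. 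In particular it identifies the cycle components with the crowns and the elements of $\mathcal{P}(B_N)$, the path components with both endpoints in $R$, with the M-fences of length at least four. Reading off the per-trail factors of Corollary~\ref{cor:formula}, each crown gives the factor $2$ and each such M-fence gives $\alpha(Z_i)=|A(Z_i)|/2=\frac{1}{2}((m-3)+3)=\frac{1}{2}(|V(P)|+3)$, whence $\alpha(N)=2^{c}\prod_{P\in\mathcal{P}(B_N)}\frac{1}{2}(|V(P)|+3)$; combining this with $\beta(N)\le\alpha(N)$ completes the proof.

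The step I expect to be hardest is making the trail-to-component dictionary rigorous, and above all handling the vertices of $O\cap R$. Without the two-copy reading of $B_N$ such a vertex would have degree three and would fuse several zig-zag trails into one component, wrecking the factorization; the key is to recognize that its $O$-role and $R$-role occupy different components. Granting this, what remains is the routine but slightly fussy bookkeeping of peak and valley degrees near the endpoints of each fence, together with the check that a single trail never breaks into two components of $B_N$.
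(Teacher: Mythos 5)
Your argument is correct and follows essentially the same route as the paper: the paper does not formally prove this cited result but justifies it via $\beta(N)\le\widehat{\alpha}(N)\le\alpha(N)$ together with the dictionary identifying the cycle components of $B_N$ with the crowns and the elements of $\mathcal{P}(B_N)$ with the M-fences, so that the right-hand side equals $\alpha(N)$ by Corollary~\ref{cor:formula}. Your write-up additionally pins down the degenerate cases (length-$2$ M-fences contributing nothing, isolated $R$-vertices arising from length-$4$ M-fences, and the two-copy reading of vertices in $O\cap R$) that the paper's discussion glosses over.
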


In \cite{pons}, Pons~\textit{et al.}~discussed the number $\alpha(N)$ of subdivision trees of a tree-based phylogenetic network $N$ in an approach similar to that of Jetten~\cite{JettenOLD}. They obtained an exact formula for $\alpha(N)$ and mentioned that it is possible in polynomial time to count $\alpha(N)$ in an approach focusing on matchings in a bipartite graph associated with $N$   (Theorem 8 in \cite{pons}). Also, they reframed their result as follows, thus showing the connection with the formula in Theorem~\ref{thm:jetten:count}.

\begin{theorem}[Theorem 9 in \cite{pons}]\label{thm:pons:count}
With the same premises and notation as in Theorem~\ref{thm:jetten:count}, we have 	
\begin{align*}
		\alpha(N) = 2^c\cdot \prod_{P\in \mathcal{P} (B_N)}{\frac{1}{2}(|V(P)|+3)}.
	\end{align*}
\end{theorem}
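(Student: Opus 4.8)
The plan is to deduce Pons et al.'s formula directly from our own count in Corollary~\ref{cor:formula} by matching the two product decompositions of $\alpha(N)$ term by term. Since $N$ is tree-based, no $Z_i$ is a W-fence (Corollary~\ref{cor:characterization.tbn}), so Corollary~\ref{cor:formula} gives $\alpha(N)=\prod_i \alpha(Z_i)$ in which each crown contributes a factor $2$, each N-fence a factor $1$, and each M-fence $Z_i$ a factor $|A(Z_i)|/2$. It therefore suffices to exhibit a correspondence between the maximal zig-zag trails of $N$ and the connected components of $B_N$ under which crowns match the cycle components (so that the $c$ crowns contribute $2^c$) and M-fences match the path components whose two ends lie in $R$, while all remaining trails carry the trivial factor $1$ and match components that are neither cycles nor $R$-$R$ paths.

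First I would set up the dictionary between the two notions. In a zig-zag trail the shared heads are exactly the reticulations and the shared tails are exactly the (out-degree-$2$) tree vertices; such a tree vertex is an omnian precisely when both of its children are reticulations, which holds at every interior shared tail of a crown or M-fence but fails at the two end shared tails of an M-fence, whose outer neighbour is a head of in-degree $1$. Reading off the induced omnian-to-reticulation arcs then shows that a crown of $N$ produces a closed alternating omnian/reticulation walk, i.e. a cycle component of $B_N$, and that an M-fence $Z_i$ with $|A(Z_i)|\geq 4$ produces a path $r_0 - o_1 - r_1 - \cdots - o_k - r_k$ whose two ends $r_0,r_k$ are reticulations, i.e. an element $P\in\mathcal{P}(B_N)$. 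Counting its vertices yields $|V(P)|=|A(Z_i)|-3$, so that $\frac{1}{2}(|V(P)|+3)=|A(Z_i)|/2=\alpha(Z_i)$, while each crown matches the factor $2$; the two-arc M-fences contain no reticulation and leave no vertex in $B_N$.

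To finish I would verify the converse directions so that the correspondence is a genuine bijection. Every cycle of $B_N$ consists of degree-$2$ omnians and reticulations and hence traces a maximal crown in $N$; and every $R$-$R$ path, extended in $N$ through the non-omnian parents of its two end-reticulations, closes up to a maximal M-fence, where tree-basedness is what guarantees that this extension terminates at head-endpoints (yielding an M-fence) rather than producing a forbidden W-fence. It then remains to check that the other components of $B_N$, namely the $O$-$R$ paths arising from N-fences together with isolated vertices, are never cycles or $R$-$R$ paths, so they are invisible to the right-hand product; the matching of factors then gives $\prod_i\alpha(Z_i)=2^c\prod_{P\in\mathcal{P}(B_N)}\frac{1}{2}(|V(P)|+3)$.

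The main obstacle, and the step demanding the most care, is that a single vertex of $N$ may be simultaneously an omnian and a reticulation (its unique child being a reticulation), so that the bipartition $\{O,R\}$ of $B_N$ is only literal once such vertices are split into an $O$-copy and an $R$-copy. This split is exactly what decouples the role of such a vertex as a shared head (valley) in one trail from its role as the reticulation-endpoint of the N-fence carrying its out-arc, and it is what makes the component count of $B_N$ agree with the trail count instead of over-merging them: a small example, a reticulation whose child is again a reticulation, shows that without the split the right-hand side strictly over-counts. I would also treat the atypical fences of Remark~\ref{rem:atypical} separately, checking that the identity $|V(P)|=|A(Z_i)|-3$ survives their non-standard drawings, the binary degree constraints ensuring that the shared heads and shared tails involved remain pairwise distinct.
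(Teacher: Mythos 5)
Your derivation is correct and follows essentially the same route as the paper: Theorem~\ref{thm:pons:count} is a cited result from \cite{pons}, and the paper's own justification is precisely the paragraph following Corollary~\ref{cor:formula}, which matches crowns to the cycle components of $B_N$ (yielding the factor $2^c$) and M-fences to the path components with both ends in $R$ via $|V(P)|=|A(Z_i)|-3$, so that $\frac{1}{2}(|V(P)|+3)=|A(Z_i)|/2=\alpha(Z_i)$. Your write-up is somewhat more careful than the paper's informal sketch (notably on two-arc M-fences, the converse direction of the bijection, and vertices that are simultaneously omnians and reticulations), but the underlying argument is the same.
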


Using Corollary~\ref{cor:formula}, we can straightforwardly understand the formulae in Theorems~\ref{thm:jetten:count} and \ref{thm:pons:count} from a more general perspective. This is because the decomposition expression of $\alpha(N)$ in Corollary~\ref{cor:formula} clarifies what factors constitute the number $\alpha(N)$ for any rooted binary (not necessarily tree-based) phylogenetic $X$-network $N$, in a way reminiscent of the prime factorization of natural numbers. 
When $N$ is a tree-based phylogenetic network, Corollary~\ref{cor:formula} states that    $\alpha(N)$ is determined only by two kinds of factors, namely, the number of crowns and (a half of) the length of each M-fence in the maximal zig-zag trail decomposition $\mathcal{Z}$ of $N$. As can be checked easily, the factor $2^c$ in the formulae in Theorems \ref{thm:jetten:count} and \ref{thm:pons:count}  is exactly the number of admissible subsets for those crowns  as $c$ equals the number of crowns in $\mathcal{Z}$. Also, if we notice that there exists a bijection between the set $\mathcal{P} (B_N)$ and the set of M-fences in $\mathcal{Z}$, we can easily see that  $\prod_{P\in \mathcal{P} (B_N)}{\frac{1}{2}(|V(P)|+3)}$  represents the contribution from the M-fences in $\mathcal{Z}$. Indeed, each element $P$ of $\mathcal{P} (B_N)$  is essentially an M-fence, except that every M-fence has  two more arcs on each end of it that do not appear in $B_N$ (\textit{i.e.}, each M-fence $Z_i$ has four more arcs than its corresponding element $P$ of $\mathcal{P} (B_N)$). Thus, when $Z_i$ is an M-fence, we have $\frac{1}{2}{(|V(P)|-1+4)}=\frac{1}{2}{(|A(P)|+4)}=\frac{1}{2}|A(Z_i)|$.

As noted above, our formula $\alpha(N)=\alpha(Z_1)\times\dots \times \alpha(Z_\ell)$ in Corollary~\ref{cor:formula} holds true for  any rooted binary phylogenetic $X$-network $N$. By virtue of this, the algorithm derived from Corollary~\ref{cor:formula} (Algorithm~\ref{algm:count}) does not require  the input network $N$ to be tree-based, and  can solve Problem~\ref{prob:count} and Problem~\ref{prob:decision} simultaneously. Obviously, we could give an algorithm that immediately returns $\alpha(N)=0$ when $\alpha(Z_i)=0$ is obtained for some $i$, but in Algorithm~\ref{algm:count}, we provide pseudocode that computes the number $\alpha(Z_i)$ of admissible subsets for every $Z_i$ so that the reader can smoothly understand how to use the formula for $\alpha(N)$.

\begin{corollary}\label{cor:counting.complexity}
Algorithm~\ref{algm:count} can solve Problem~\ref{prob:count} in $O(|A(N)|)$ time. 
\end{corollary}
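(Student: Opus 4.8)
The plan is to treat correctness and the running-time bound separately, relying on Corollary~\ref{cor:formula} for the former and on Proposition~\ref{prop:preprocessing.running.time} together with the partition property from Theorem~\ref{uniquely.decomposable} for the latter. Correctness is essentially immediate: Corollary~\ref{cor:formula} asserts that $\alpha(N)=\prod_{i=1}^{\ell}\alpha(Z_i)$ with each factor $\alpha(Z_i)$ given by the case analysis in equation~(\ref{eq:counting}), so it suffices to observe that Algorithm~\ref{algm:count} correctly classifies each maximal zig-zag trail $Z_i$ as a crown, N-fence, M-fence, or W-fence and then assigns the value $\alpha(Z_i)$ dictated by (\ref{eq:counting}), finally returning the product. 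Classification is a local inspection of $Z_i$: from its sequence representation one reads off $m_i:=|A(Z_i)|$ and its parity, decides whether $Z_i$ closes into a cycle (crown) or has two endpoints (fence), and for a fence distinguishes the N-, W-, and M-cases from the in/out-degree pattern at the two endpoints, exactly as already used in the proof of Corollary~\ref{thm:decision.search}.

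For the running time, I would first invoke Proposition~\ref{prop:preprocessing.running.time} to obtain the decomposition $\mathcal{Z}=\{Z_1,\dots,Z_\ell\}$ in $O(|A(N)|)$ time. For each $Z_i$, the classification described above takes $O(|A(Z_i)|)$ time, and once the type is known, evaluating the relevant case of (\ref{eq:counting}) takes $O(1)$ time (for an M-fence one simply returns $|A(Z_i)|/2$, a quantity already at hand). The observation that makes the loop linear overall is that, by Theorem~\ref{uniquely.decomposable}, the arc-sets $A(Z_1),\dots,A(Z_\ell)$ form a partition of $A(N)$; hence $\sum_{i=1}^{\ell}|A(Z_i)|=|A(N)|$, and the total work across all iterations telescopes to $\sum_{i=1}^{\ell}O(|A(Z_i)|)=O(|A(N)|)$.

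Finally, accumulating the product $\prod_{i=1}^{\ell}\alpha(Z_i)$ costs $\ell-1$ multiplications, and since $\ell\leq|A(N)|$ this adds only $O(|A(N)|)$ under the unit-cost arithmetic model; combining the three contributions yields the claimed $O(|A(N)|)$ bound. I do not anticipate a genuine obstacle here, as the statement is a direct complexity reading of Corollary~\ref{cor:formula}; the only point warranting a word of care is the cost of the arithmetic, because $\alpha(N)$ can be exponentially large and hence has $\Omega(|A(N)|)$ bits, so under a strict bit-complexity model the multiplications would no longer be $O(1)$. Under the unit-cost RAM model customary in this setting and used throughout the paper, the linear bound holds, and as in Remark~\ref{rem:theta.running.time} it is optimal, since $\Omega(|A(N)|)$ time is needed merely to read $N$.
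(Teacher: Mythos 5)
Your proof is correct and follows essentially the same route as the paper's: decompose via Algorithm~\ref{algm} (Proposition~\ref{prop:preprocessing.running.time}), classify each $Z_i$ and read off $\alpha(Z_i)$ from equation~(\ref{eq:counting}), and take the product as justified by Corollary~\ref{cor:formula}, with the total work telescoping to $O(|A(N)|)$ because the $A(Z_i)$ partition $A(N)$. Your added caveat about the unit-cost versus bit-complexity model for the multiplications is a legitimate observation that the paper's (terser) proof does not address, but it does not change the approach.
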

\begin{proof}
	The algorithm counts the number $\alpha(N)$ of subdivision trees of $N$ as follows: 1) compute the maximal zig-zag trail decomposition $\mathcal{Z}=\{Z_1,\dots Z_\ell\}$ of $N$ by Algorithm~\ref{algm}; 2) determine $\alpha(Z_i)$ for each  $Z_i$; 3) compute $\alpha(N)=\alpha(Z_1)\times\dots\times \alpha(Z_\ell)$. The most expensive step is the first preprocessing, which requires $O(|A(N)|)$ time by Proposition~\ref{prop:preprocessing.running.time}. This completes the proof. 
\end{proof}

By Remark~\ref{rem:theta.running.time}, it is guaranteed that Algorithm~\ref{algm:count} is optimal and that Problem~\ref{prob:count} can be solved in $\Theta(|A(N)|)$ time.

\begin{algorithm}
\caption{\textsc{Counting} (Problem~\ref{prob:count})\label{algm:count}}
\begin{algorithmic}
\Require{A rooted binary phylogenetic $X$-network $N$}
\Ensure{The number $\alpha(N)\geq 0$ of subdivision trees of $N$}

\State{call \textsc{Pre-processing} (Algorithm~\ref{algm}) to obtain $\mathcal{Z}=\{Z_1,\dots, Z_\ell\}$}
\State initialize $\alpha_1,\dots, \alpha_\ell := 0$
\ForAll {maximal zig-zag trail $Z_i\in \mathcal{Z}$}

\If{$Z_i$ is a W-fence}
\State $\alpha_i := 0$
\Comment Corollary~\ref{cor:characterization.tbn}
\ElsIf{$Z_i$ is an N-fence}
\State $\alpha_i := 1$
\Comment  $\alpha_i=|\mathcal{S}(Z_i)|$ in the equation~(\ref{eq:counting})
\ElsIf{$Z_i$ is a crown}
\State $\alpha_i := 2$
\Else\; ($Z_i$ is an M-fence)
\State $\alpha_i :=  |A(Z_i)|/2$ 
\EndIf
\EndFor
\State $\alpha(N):=\alpha_1\times \dots \times \alpha_\ell$
\Comment Corollary~\ref{cor:formula}
\State output $\alpha(N)$ and \textbf{halt}
\end{algorithmic}
\end{algorithm}

As we now demonstrate, the number $\alpha(N)$ may give insights into the ``complexity'' of tree-based phylogenetic networks. For example, given a tree-based phylogenetic network $N$ on $X=\{x_1, \dots, x_8\}$ as shown in Figure~\ref{fig:universalupper}, Algorithm~\ref{algm:count} starts by decomposing  $N$ into 21 maximal N-fences consisting of a single arc and 7 maximal M-fences of sizes 2, 4, 6, 8, 10, 12, 14, and so returns $\alpha(N)=7!=5040$.  Comparing this output with the trivial upper bound $2^r=2^{21}=2097152$, where $r$ denotes the number of reticulation vertices of $N$, we can see that it is meaningful to compute  the exact value of $\alpha(N)$. Although the number $\alpha(N)=5040$ may seem huge, it is smaller than the number of rooted binary phylogenetic $X$-trees that is given by $(2|X|-3)!! =13\times 11 \times \cdots \times 5 \times 3\times 1=135135$. In other words, $N$ does not have adequate complexity in order to cover all rooted binary phylogenetic $X$-trees (\textit{i.e.}, all evolutionary scenarios that can be represented using a tree).  Thus, the number $\alpha(N)$ can be used as a quantitative measure for the complexity of $N$, which may have implications on  model selection in evolutionary data analysis.

\begin{figure}[htbp]
\centering
\includegraphics[scale=.66]{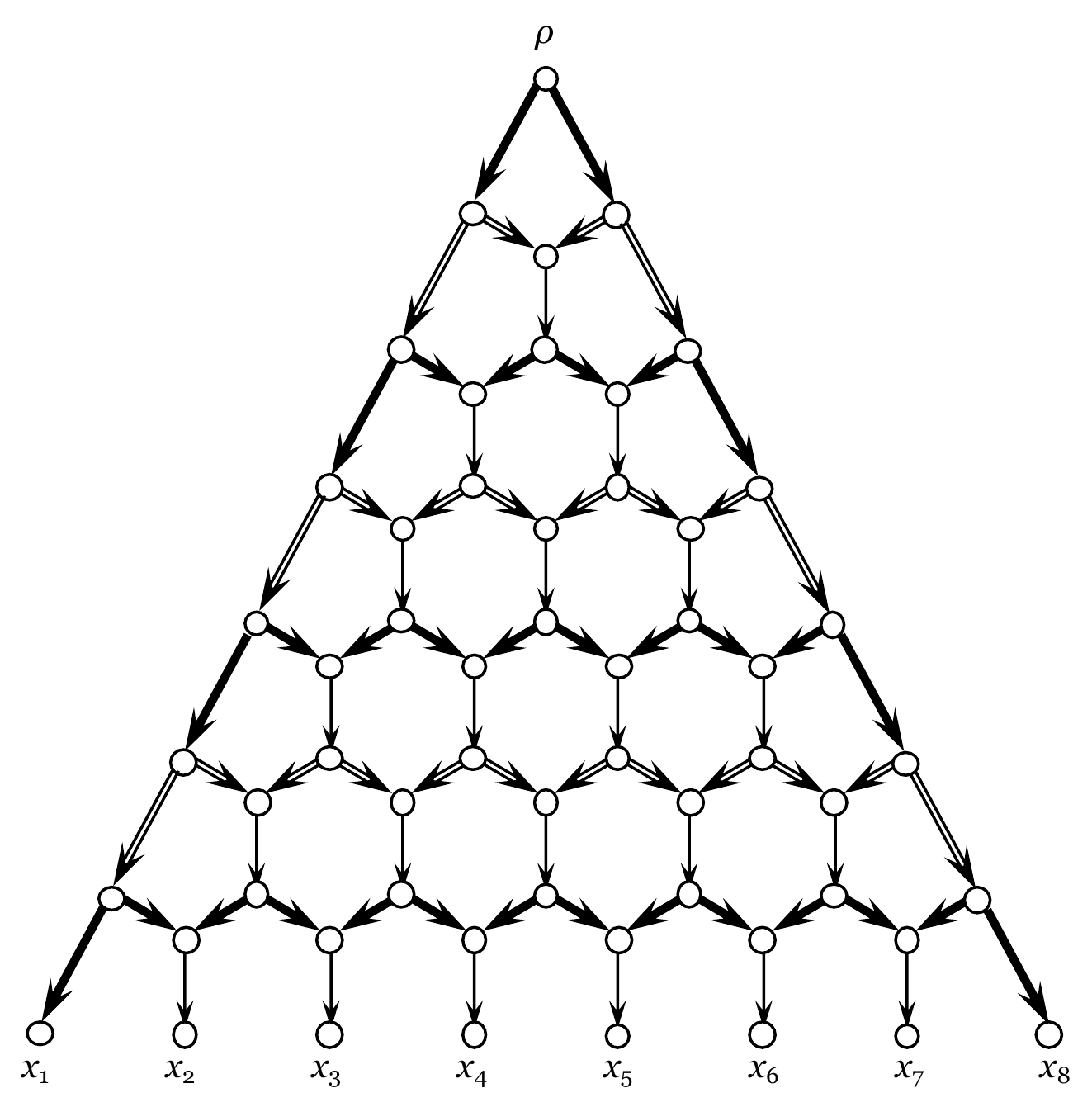}
\caption{
An example to demonstrate our counting algorithm. This network was previously considered in two  independent studies~\cite{UTBN, LX} in the context of constructing ``universal'' tree-based phylogenetic networks.
 \label{fig:universalupper}}
\end{figure}

\subsection{Linear delay algorithm for the enumeration (listing) problem} 

As Theorem~\ref{structure} gives an explicit characterization of the set $\mathcal{T}$ of all subdivision trees of a tree-based phylogenetic network $N$, it also furnishes a straightforward algorithm for solving Problem~\ref{prob:listing}. As in Algorithm~\ref{algm:listing}, it first decomposes $N$ into maximal zig-zag trails $Z_1,\dots, Z_\ell$ and then output each element of $\mathcal{T}=\prod_{i\in [1,\ell]}{\mathcal{S}(Z_i)}$ one after another until it finish listing all.  Recalling Definition~\ref{dfn:polynomial-delay}, we  now prove the following.

\begin{algorithm}
\caption{\textsc{Enumeration}  (Problem~\ref{prob:listing})
 \label{algm:listing}}
\begin{algorithmic}
\Require{A tree-based phylogenetic network $N$ on $X$}
\Ensure{All subdivision trees of $N$}

\State{call \textsc{Pre-processing} (Algorithm~\ref{algm}) to obtain $\mathcal{Z}=\{Z_1,\dots, Z_\ell\}$}
\State update $\mathcal{Z}:=(Z_1,\dots, Z_\ell)$
\Comment fix the ordering of $\mathcal{Z}$ 
\State initialize $\mathcal{A}_1, \dots, \mathcal{A}_\ell, \mathcal{T}:=\emptyset$
\ForAll {maximal zig-zag trail $Z_i\in \mathcal{Z}$}
\If{$Z_i$ is a crown}
\State $\mathcal{S}_i:=\bigl\{ \langle (10)^{|A(Z_i)|/2}\rangle,\, \langle (01)^{|A(Z_i)|/2} \rangle\bigr\}$
\State $\mathcal{A}_i:=$ the family of two subsets of $A(N)$ that are specified by the 1s in  each sequence in $\mathcal{S}_i$
\ElsIf{$Z_i$ is an M-fence}
\State $\mathcal{S}_i:=\bigl\{ \langle 1(01)^p(10)^q 1 \rangle \mid  p,q\in \mathbb{Z}_{\geq 0}, p+q=(|A(Z_i)|-2)/2\bigr\}$
\State $\mathcal{A}_i:=$ the family of $|A(Z_i)|$ subsets of $A(N)$ that are specified by the 1s in  each sequence in $\mathcal{S}_i$
\Else
\State $\mathcal{S}_i:=\bigl\{\langle 1(01)^{(|A(Z_i)|-1)/2}\rangle\bigr\}$
\State $\mathcal{A}_i:=$ the family of the single subset of $A(N)$ that is specified by the 1s in the sequence in $\mathcal{S}_i$
\EndIf
\EndFor
\ForAll {$A\in \prod_{i\in[1,\ell]}{\mathcal{A}_i}$}
\State output $(V(N), A)$ as a subdivision tree of $N$ 
\Comment Theorem~\ref{structure}
\EndFor
\State \textbf{halt}
\end{algorithmic}
\end{algorithm}

\begin{corollary}\label{cor:enumeration}
Algorithm~\ref{algm:listing} can solve Problem~\ref{prob:listing} in $O(|A(N)|)$ time delay. 
\end{corollary}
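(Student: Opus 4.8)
The plan is to verify the two properties required by Definition~\ref{dfn:polynomial-delay} — a linear-time bound on producing the first subdivision tree and a linear-time bound on producing each subsequent one — after first confirming that Algorithm~\ref{algm:listing} outputs precisely the members of $\mathcal{T}$ and nothing else. Correctness is immediate from Theorem~\ref{structure}: the algorithm lets $A$ range over $\prod_{i\in[1,\ell]}\mathcal{A}_i$, where each $\mathcal{A}_i$ is the faithful translation of $\mathcal{S}(Z_i)$ from the 0-1 sequences of~(\ref{sequence}) into the corresponding arc-subsets of $A(N)$, so $(V(N),A)$ runs exactly through $\mathcal{T}=\prod_{i\in[1,\ell]}\mathcal{S}(Z_i)$ with each subdivision tree appearing once.

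For the first-solution bound, I would invoke Proposition~\ref{prop:preprocessing.running.time} for the $O(|A(N)|)$ cost of the preprocessing, and then observe that the local families $\mathcal{S}(Z_i)$ need not be materialised in full; it suffices to store each implicitly by its generating rule in~(\ref{sequence}) (for an M-fence, say, by the index $p\in\{0,\dots,(m_i-2)/2\}$). Selecting the first choice in every coordinate and writing out the resulting spanning tree then costs $\sum_{i=1}^{\ell}O(|A(Z_i)|)=O(|A(N)|)$.

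For the delay between consecutive outputs, I would treat the enumeration of $\prod_{i\in[1,\ell]}\mathcal{A}_i$ as a mixed-radix odometer whose $i$-th digit ranges over the $\alpha(Z_i)$ local choices (cf.\ Corollary~\ref{cor:formula}). Advancing the odometer once changes the choices in a set $I$ of coordinates (those that carry); recomputing the arc-contributions of exactly these trails and assembling the next tree costs $O\!\left(\sum_{i\in I}|A(Z_i)|\right)=O(|A(N)|)$, and emitting a spanning tree, which has $\Theta(|A(N)|)$ arcs, is likewise $O(|A(N)|)$. Hence each step meets the linear budget, and the algorithm halts once the odometer overflows.

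The main obstacle is the potential cascade of carries in the odometer, which could in principle touch every coordinate in a single step; the resolution is simply that the trails partition $A(N)$, so the identity $\sum_{i}|A(Z_i)|=|A(N)|$ caps the cost of even a full cascade, and the same identity keeps the implicit-to-explicit translation of the local families within the linear budget. Since any algorithm must spend $\Omega(|A(N)|)$ merely to write out a single spanning tree, the resulting linear delay is in fact optimal.
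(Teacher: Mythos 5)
Your proposal is correct and follows essentially the same route as the paper: decompose via Theorem~\ref{structure}, generate each local family $\mathcal{S}(Z_i)$ in $O(|A(Z_i)|)$ time (or $O(1)$ for non-M-fences), and bound both the first output and each successive output of $\prod_{i\in[1,\ell]}\mathcal{S}(Z_i)$ by $\sum_i O(|A(Z_i)|)=O(|A(N)|)$ using the fact that the trails partition $A(N)$. The paper's proof simply asserts that producing the next unseen element of the product costs $O(|A(N)|)$, whereas you make the mechanism explicit with the mixed-radix odometer and the carry-cascade bound; this is a welcome elaboration of the same argument rather than a different one.
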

\begin{proof} 
By Theorem~\ref{structure}, 
the elements of $\mathcal{S}(Z_i)$ can be generated in $O(|A(Z_i)|)$ time if $Z_i\in \mathcal{Z}$ is an M-fence, 
 and   in $O(1)$ time otherwise. Then,  each of the following steps requires $O(|A(N)|)$ time: 
\begin{enumerate}
	\item to output an arbitrary element of $\prod_{i\in [1,\ell]}{\mathcal{S}(Z_i)}$ as $T_1$;
	\item to output a next element of $\prod_{i\in [1,\ell]}{\mathcal{S}(Z_i)}$ that has not been output yet if there exists any and stop otherwise. 
\end{enumerate}
This completes the proof.
\end{proof}

Similarly to  Remark~\ref{rem:theta.running.time}, Algorithm~\ref{algm:listing} is  optimal in terms of time complexity and Problem~\ref{prob:listing} can be solved in $\Theta(|A(N)|)$ time delay. 

In the case when the number $\alpha(N)$ of subdivision trees of $N$ is too large, or when it is not necessary to list all the subdivision trees of $N$, one may abort Algorithm~\ref{algm:listing}  when the designated number $k$ of subdivision trees are generated.  
Recalling that the running time of polynomial delay algorithms is linear in the size of the output (Section~\ref{sec:prob}), we have  the following corollary. 

\begin{corollary}\label{cor:k-listing}
For any tree-based phylogenetic network $N$ on $X$ and for any natural number $k$ with $k \leq \alpha(N)$,  it is possible  to generate $k$  subdivision trees of $N$ in $O(k|A(N)|)$ time.
\end{corollary}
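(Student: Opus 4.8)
The plan is to derive the statement as a direct consequence of Corollary~\ref{cor:enumeration} together with the general principle, recalled in Section~\ref{sec:prob}, that the running time of a linear time delay enumeration algorithm is linear in the size of the output. Concretely, I would run Algorithm~\ref{algm:listing} but halt it as soon as $k$ subdivision trees have been emitted, and then bound the total running time by summing the delays incurred up to that point.

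First I would check that the hypothesis $k\le\alpha(N)$ guarantees that Algorithm~\ref{algm:listing} does in fact produce at least $k$ distinct subdivision trees before terminating, so that aborting after the $k$-th output is well defined. Indeed, by Theorem~\ref{structure} the algorithm enumerates exactly the elements of $\mathcal{T}=\prod_{i\in[1,\ell]}\mathcal{S}(Z_i)$, and by Corollary~\ref{cor:formula} we have $|\mathcal{T}|=\alpha(N)$; hence there are $\alpha(N)\ge k$ outputs available. Next I would invoke the delay guarantee of Corollary~\ref{cor:enumeration}. The first output absorbs the pre-processing, namely decomposing $N$ into $\mathcal{Z}=\{Z_1,\dots,Z_\ell\}$ (cost $O(|A(N)|)$ by Proposition~\ref{prop:preprocessing.running.time}) and building the families $\mathcal{A}_1,\dots,\mathcal{A}_\ell$, and is therefore $O(|A(N)|)$. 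Thereafter, passing from one element of the direct product to the next amounts to a mixed-radix increment over the coordinates $\mathcal{S}(Z_i)$, after which the corresponding arc-set $A\subseteq A(N)$ is written out as the tree $(V(N),A)$; since $|A|=|V(N)|-1=O(|A(N)|)$, each such step costs $O(|A(N)|)$. Summing the first output and the $k-1$ subsequent ones yields $O(|A(N)|)+(k-1)\cdot O(|A(N)|)=O(k|A(N)|)$, as claimed.

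There is essentially no serious obstacle here, since the corollary is a bookkeeping consequence of the delay bound already established in Corollary~\ref{cor:enumeration}. The only point that needs genuine care is verifying that the per-output cost really stays $O(|A(N)|)$ throughout the enumeration rather than degrading as one advances through $\prod_{i\in[1,\ell]}\mathcal{S}(Z_i)$: one must confirm that iterating to the next element never requires re-scanning the whole product, which is precisely what the mixed-radix viewpoint secures. With that in hand, the linear-in-$k$ bound follows immediately, and it is also worth remarking that the bound is best possible in the sense of Remark~\ref{rem:theta.running.time}, since merely writing out $k$ trees already takes $\Omega(k|A(N)|)$ time.
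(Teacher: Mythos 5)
Your proposal is correct and matches the paper's own argument: the paper likewise obtains this corollary by aborting Algorithm~\ref{algm:listing} after the $k$-th output and multiplying the $O(|A(N)|)$ delay bound of Corollary~\ref{cor:enumeration} by $k$. The extra care you take (verifying $\alpha(N)\ge k$ outputs exist and that the per-step cost does not degrade across the direct product) is a sensible elaboration of the same route, not a different one.
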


\subsection{Linear time algorithm for the optimization problem}
Let us use the notation $f_i(T)$ to represent $\sum_{a\in {A(T)\cap A(Z_i)}}{w(a)}$ for each maximal zig-zag trail $Z_i$ of a tree-based phylogenetic network $N$. Then, the value of the objective function $f$ for a subdivision tree $T$ can be expressed as  $f(T)=f_1(T)+\dots + f_\ell(T)$. As the choice of an admissible subset of $A(Z_i)$  does not affect the choice of an  admissible subset for any maximal zigzag trail other than $Z_i$, one can get a solution to Problem~\ref{prob:optimization} simply by piecing together an optimal solution within each $Z_i$, \textit{i.e.}, an admissible subset of $A(Z_i)$ to maximize each $f_i$. From this argument, we obtain  Algorithm~\ref{algm:optimization}. 

\begin{corollary}\label{cor:optimisation}
Algorithm~\ref{algm:optimization} can solve Problem~\ref{prob:optimization} in $O(|A(N)|)$ time. 
\end{corollary}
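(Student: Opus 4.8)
The plan is to reuse the decomposition-based template of the preceding corollaries, now exploiting the \emph{additive} structure of the objective on top of the direct-product structure of Theorem~\ref{structure}. Since $N$ is tree-based, Corollary~\ref{cor:characterization.tbn} guarantees that no $Z_i$ is a W-fence, so each factor $\mathcal{S}(Z_i)$ in the decomposition is non-empty and of one of the three types in~(\ref{sequence}). By Theorem~\ref{structure}, choosing a subdivision tree $T$ amounts to choosing, independently, one admissible arc-set $S_i \in \mathcal{S}(Z_i)$ for each $i \in [1,\ell]$, and by Theorem~\ref{uniquely.decomposable} the sets $A(Z_i)$ partition $A(N)$, so $A(T)$ is the disjoint union of the $S_i$. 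Hence $f(T) = \sum_{i\in[1,\ell]} f_i(T)$ with $f_i(T) = \sum_{a \in A(T)\cap A(Z_i)} w(a)$ depending only on the local choice $S_i$. Because the choices are mutually independent, maximizing $f$ over all subdivision trees is equivalent to maximizing each $f_i$ separately over $\mathcal{S}(Z_i)$ and concatenating the local optima; this is precisely what Algorithm~\ref{algm:optimization} does, which settles correctness.

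For the time bound I would proceed trail by trail, using that the preprocessing in Algorithm~\ref{algm} costs $O(|A(N)|)$ by Proposition~\ref{prop:preprocessing.running.time} and that $\sum_{i\in[1,\ell]} |A(Z_i)| = |A(N)|$. When $Z_i$ is an N-fence, $\mathcal{S}(Z_i)$ is a singleton, so there is no choice to make and evaluating $f_i$ once takes $O(|A(Z_i)|)$ time. When $Z_i$ is a crown, $|\mathcal{S}(Z_i)| = 2$, so we compute both candidate values and keep the larger, again in $O(|A(Z_i)|)$ time. These two cases already stay within the linear budget.

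The delicate case, and the step I expect to be the main obstacle, is the M-fence, where $|\mathcal{S}(Z_i)| = |A(Z_i)|/2$ and a naive evaluation of each candidate in $O(|A(Z_i)|)$ time would cost $O(|A(Z_i)|^2)$, which is $\Theta(|A(N)|^2)$ in the worst case of one long M-fence and so violates the linear bound. The key observation restoring linearity is that the admissible sequences $\langle 1(01)^p(10)^q 1\rangle$ with $p+q = (|A(Z_i)|-2)/2$ change only locally as $p$ increases by one: passing from $p$ to $p+1$ swaps exactly the two bits at the boundary of the $(01)$- and $(10)$-blocks, deselecting one arc and selecting its neighbour, so the corresponding values of $f_i$ differ by a single term of the form $w(a') - w(a)$. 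Consequently all $|A(Z_i)|/2$ values can be produced by computing the value for $p=0$ in $O(|A(Z_i)|)$ time and then updating in $O(1)$ per step while tracking the running maximum, which handles the whole M-fence in $O(|A(Z_i)|)$ time. (Equivalently, one may precompute prefix sums of the arc weights along $Z_i$ and read off each $f_i$-value in $O(1)$.)

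Summing the per-trail costs, every $Z_i$ is processed in $O(|A(Z_i)|)$ time regardless of type, so the total is $\sum_{i\in[1,\ell]} O(|A(Z_i)|) = O(|A(N)|)$; adding the $O(|A(N)|)$ preprocessing yields the claimed $O(|A(N)|)$ bound. As in Remark~\ref{rem:theta.running.time}, this running time is optimal, since merely reading $N$ takes $\Omega(|A(N)|)$ time.
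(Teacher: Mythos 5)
Your proposal is correct and follows essentially the same route as the paper: decompose via Algorithm~\ref{algm}, use the direct-product structure of Theorem~\ref{structure} to reduce global optimization to independent local optimizations over each $\mathcal{S}(Z_i)$, and bound the per-trail cost by $O(|A(Z_i)|)$. Your treatment of the M-fence case is in fact more careful than the paper's, which simply asserts that an optimal admissible set within an M-fence can be found in $O(|A(Z_i)|)$ time; your observation that consecutive sequences $\langle 1(01)^p(10)^q1\rangle$ differ in exactly two bits (or, equivalently, the prefix-sum trick) supplies the justification for that assertion.
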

\begin{proof}
Algorithm~\ref{algm:optimization} first decomposes $N$ into maximal zig-zag trails $Z_1,\dots, Z_\ell$, which requires $O(|A(N)|)$ time according to Proposition~\ref{prop:preprocessing.running.time}. For each $Z_i$, it takes $|O(Z_i)|$ time to decide whether $Z_i$ is a crown, M-fence, or N-fence. If $Z_i\in \mathcal{Z}$ is an M-fence, then one can find an optimal solution within $Z_i$  in $O(|A(Z_i)|)$ time,  and otherwise in $O(1)$ time. It takes $O(|A(N)|)$ time to compute the union of optimal solutions within $Z_i$'s and output the resulting subdivision tree $T$. 
 Overall, $O(|A(N)|)$ time suffices. This completes the proof.
\end{proof}

As noted in Remark~\ref{rem:theta.running.time}, Algorithm~\ref{algm:optimization} is optimal and Problem~\ref{prob:optimization} can be solved in $\Theta(|A(N)|)$ time.

\begin{algorithm}
\caption{\textsc{Optimization}  (Problem~\ref{prob:optimization})
 \label{algm:optimization}}
\begin{algorithmic}
\Require{A tree-based phylogenetic network $N$ on $X$ and its associated weighting function $w:A(N)\rightarrow \mathbb{R}_{\geq 0}$}
\Ensure{A subdivision tree $T$ of $N$ to maximize  $\sum_{a\in A(T)} {w(a)}$}

\State{call \textsc{Pre-processing} (Algorithm~\ref{algm}) to obtain $\mathcal{Z}=\{Z_1,\dots, Z_\ell\}$}
\State update $\mathcal{Z}:=(Z_1,\dots, Z_\ell)$
\State initialize $\mathcal{A}^*_1, \dots, \mathcal{A}^*_\ell:=\emptyset$
\ForAll {maximal zig-zag trail $Z_i\in \mathcal{Z}$}
\If{$Z_i$ is a crown}
\State $\mathcal{S}_i:=\bigl\{ \langle (10)^{|A(Z_i)|/2}\rangle,\, \langle (01)^{|A(Z_i)|/2} \rangle\bigr\}$
\State $\mathcal{A}_i:=$ the family of two subsets of $A(N)$ that are specified by the 1s in  each sequence in $\mathcal{S}_i$
\State $A^*_i :=$ an element of $\arg \max_{A\in \mathcal{A}_i}{\sum_{a\in A}{w(a)}}$ 
\ElsIf{$Z_i$ is an M-fence}
\State $\mathcal{S}_i:=\bigl\{ \langle 1(01)^p(10)^q 1 \rangle \mid  p,q\in \mathbb{Z}_{\geq 0}, p+q=(|A(Z_i)|-2)/2\bigr\}$
\State $\mathcal{A}_i:=$ the family of $|A(Z_i)|$ subsets of $A(N)$ that are specified by the 1s in  each sequence in $\mathcal{S}_i$
\State $A^*_i :=$ an element of $\arg \max_{A\in \mathcal{A}_i}{\sum_{a\in A}{w(a)}}$ 
\Else 
\State $A^*_i :=$ the subset of $A(N)$ that are specified by the 1s in the sequence $\langle 1(01)^{(|A(Z_i)|-1)/2}\rangle$
\EndIf
\EndFor
\State $T:=(V(N), A^*_1\cup\dots\cup A^*_\ell)$
\Comment The union of optimal solutions within $Z_i$'s
\State output $T$ and \textbf{halt}
\end{algorithmic}
\end{algorithm}

\section{Remark on a special class of non-binary phylogenetic networks}\label{sec:non-binary}
In recent years, many studies have discussed tree-based phylogenetic networks that are not necessarily binary and it has been shown that the results established in binary settings may or may not hold in the non-binary case  (\textit{e.g.},~\cite{fischer2018non, mathbio2018, JettenOLD, Jetten, pons}). For example, in \cite{Jetten}, Jetten and van Iersel pointed out that  Theorem~\ref{thm:jetten.characterization} (and thus also Theorem~\ref{thm:zhang}) does  not hold in general when $N$ is not binary but showed that,  if a slightly modified bipartite graph is used,  it is similarly possible to obtain both a matching-based characterization of non-binary tree-based networks  (Theorem 3.4 in \cite{Jetten}) and an algorithm that can decide in polynomial time whether or not a given non-binary phylogenetic network is tree-based by computing a maximum-sized matching in a bipartite graph (Corollary 3.5 in\cite{Jetten}). Also, Pons~\textit{et~al.}~\cite{pons} showed the results of Francis~\textit{et~al.}~\cite{newchara2018} mentioned in Remark~\ref{rem:deviation.previous.method} still hold true if $N$ is non-binary (Theorems 5 and 6 in \cite{pons}) and indicated that the non-binary version of the deviation quantification problem (Problem~\ref{prob:deviation}) can be solved  using essentially the same $O(|V(N)|^{3/2})$  time algorithm as proposed in \cite{newchara2018}. However, in \cite{pons}, it was  left as an open question about how to count the number $\alpha(N)$ of subdivision trees when $N$ is not necessarily binary.

Therefore, we point out that all results and algorithms in this paper (and also the relevant results in \cite{JettenOLD, Jetten, pons, LX}) hold true for a special class of non-binary phylogenetic networks. To explain this, we now slightly generalize the notion of rooted binary phylogenetic networks to ``almost binary'' ones.  The definition of rooted \emph{almost-binary} phylogenetic $X$-networks is the same as Definition~\ref{dfn:rbpn}, except that the third condition imposed on each vertex $v\in V(N)\setminus (X\cup \{\rho\})$ is relaxed from satisfying $\{{\it deg}^-_N(v), {\it deg}^+_N(v)\}= \{1,2\}$ to satisfying ${\it deg}^-_N(v)\leq 2$ and ${\it deg}^+_N(v)\leq 2$.  In other words, almost-binary phylogenetic networks can contain such vertices as shown in Figure~\ref{fig:X}. An example of a tree-based phylogenetic network that is almost-binary is illustrated in Figure~\ref{fig:gridnetwork}. 

One can adapt the definitions of subdivision trees and base trees (Definition~\ref{dfn:subdiv}) and of tree-based phylogenetic networks (Definition~\ref{dfn:tbn}) to the almost-binary case as they are, where we note that if $N$ is a tree-based phylogenetic network on $X$ that is almost-binary, then any base tree of $N$ is necessarily binary. The characterization of tree-based networks in terms of the existence of an admissible subset of $A(N)$ (Theorem~\ref{thm:bijection}) is still valid in the almost-binary case. 

A maximal zig-zag trail in a rooted almost-binary phylogenetic $X$-network $N$ is also defined all the same as before.  
We note that if $N$ is a rooted almost-binary phylogenetic $X$-network, then any two maximal zig-zag trails in $N$ must be arc-disjoint because ${\it deg}^-_N(v)\leq 2$ and ${\it deg}^+_N(v)\leq 2$ hold. This means that any rooted almost-binary phylogenetic $X$-network can be canonically decomposed into its unique maximal zig-zag trails as stated in Theorem~\ref{uniquely.decomposable}. As the results and algorithms in this paper are consequences of Theorem~\ref{uniquely.decomposable},  we know that they should be   correct if $N$ is not binary but almost-binary. 
It follows that many previous results (\textit{e.g.}, Theorem~\ref{thm:zhang}, Theorem~\ref{thm:jetten.characterization}, the decision algorithm in \cite{LX}, the formulae for $\alpha(N)$ in \cite{JettenOLD, pons}, and the counting algorithm in \cite{pons}) can be correctly applied to such a non-binary network as described in Figure~\ref{fig:gridnetwork}. It is worth remembering that some non-binary phylogenetic networks are just as tractable as the binary ones because problems may turn out to be easier to solve than anticipated. Indeed, in the case when $N$ is almost-binary, Algorithm~\ref{algm:deviation.quantification} can solve Problem~\ref{prob:deviation} in  $O(|A(N)|)$ time in the same way as if $N$ is binary, which is better than the time bound  $O(|V(N)|^{3/2})$ in \cite{pons}.

\begin{figure}[htbp]
\centering
\includegraphics[scale=.8]{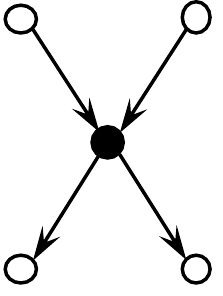}
\caption{An illustration of a vertex that is allowed to exist in rooted ``almost-binary'' phylogenetic $X$-networks (see the text for the definition).
\label{fig:X}}
\end{figure}

\begin{figure}[htbp]
\centering
\includegraphics[scale=.65]{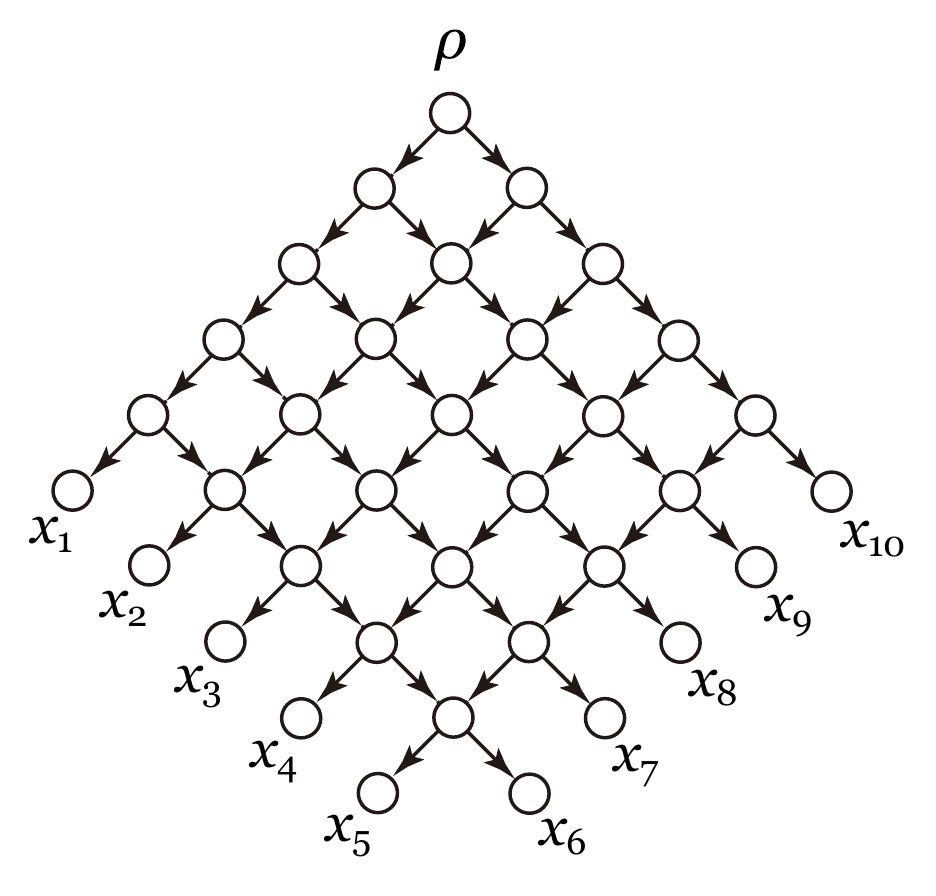}
\caption{An example of a rooted almost-binary phylogenetic $X$-network, which is tree-based. 
\label{fig:gridnetwork}}
\end{figure}

\section{Conclusion and further research directions}\label{sec:futurework}
The contributions of the paper are summarized as follows. We proved the structure theorem (Theorem~\ref{uniquely.decomposable}) that gives a way to canonically decompose any rooted binary phylogenetic network into its substructures called maximal zig-zag trails that are uniquely determined. This theorem has considerable implications for tree-based phylogenetic networks because it does not only give decomposition-based characterizations of tree-based phylogenetic networks (Lemma~\ref{iff} and Corollary~\ref{cor:characterization.tbn}) and a new cohesive perspective to proving some known results, but also, more importantly, leads to a characterization of the set of subdivision trees of a tree-based phylogenetic network in the form of a direct product of families of admissible arc-sets within each substructure (Theorem~\ref{structure}), which is in the spirit of the structure theorem for finitely generated Abelian groups and many other structural results.  
Moreover, from the above main results, we derived a series of linear time (and linear time delay) algorithms for solving a variety of old or new computational problems on tree-based phylogenetic networks and subdivision trees (Problems~\ref{prob:decision}, \ref{prob:deviation}, \ref{prob:count}, \ref{prob:listing}, and \ref{prob:optimization}). In other words, we obtained numerous corollaries and made it possible to do efficiently all of the following: to decompose a given rooted binary phylogenetic network $N$ into its maximal zig-zag trails (Algorithm~\ref{algm});  to decide whether or not $N$ is tree-based and find a subdivision tree of $N$ if $N$ is tree-based (Algorithm~\ref{algm:decision.search}); to measure the deviation of $N$ from being tree-based (Algorithm~\ref{algm:deviation.quantification}); to count the number of subdivision trees of $N$ (Algorithm~\ref{algm:count}); to list all subdivision trees of $N$ (Algorithm~\ref{algm:listing}), and to compute an optimal subdivision tree of $N$ that maximizes or minimizes a prescribed objective function (Algorithm~\ref{algm:optimization}). Each of the above algorithms is optimal in terms of time complexity (Remark~\ref{rem:theta.running.time}).  Our results do not only answer many questions or unify and extend various results in the relevant literature, but also open new possibilities for statistical applications of tree-based phylogenetic networks, such as generating subdivision trees uniformly at random and computing a maximum likelihood subdivision tree, which have not been considered previously. 

Finally, we end this paper by mentioning some open problems and possible directions for future research that would be interesting to pursue.

\subsection{Expanding the  application scope of tree-based networks and subdivision trees}
While this paper has focused on  the five fundamental problems for tree based phylogenetic networks and subdivision trees, our results suggest further avenues of research beyond these five problems. Indeed, because Theorem~\ref{structure} gives an explicit characterization of the set of subdivision trees of a tree-based phylogenetic network, it would be quite possible to design polynomial time algorithms that solve more advanced or realistic computational problems than those considered here. For example, Hayamizu and Makino \cite{topkranking}  formulate a ``top-$k$ ranking problem'', which combines the listing and optimization problems on subdivision trees, and provide a linear time delay algorithm for solving it. It would be interesting  to explore other biologically meaningful computational problems for which efficient algorithms can be developed, and such research would contribute to expanding the range of applications of tree-based phylogenetic tree networks.

\subsection{Related but different counting problems}\label{subsec:direction1}
Let us recall that a tree-based phylogenetic network can have some isomorphic subdivision trees (see Figure~\ref{fig:non-isomorphic}) and that subdivision trees are different from base trees (Definition~\ref{dfn:subdiv}). While we conjecture that the following Problem~\ref{prob:basetree} is \#P-complete  in contrast to Problem~\ref{prob:count} being solvable in linear time, it is still unknown whether the following two problems can be solved in polynomial time.

\begin{problem}\label{prob:non-isomorphic.subdivision.tree}
Given a rooted binary phylogenetic $X$-network $N$, count the number $\widehat{\alpha}(N)$ ($\geq 0$) of non-isomorphic subdivision trees of $N$.
\end{problem}

\begin{problem}[\cite{FS}]\label{prob:basetree}
Given a rooted binary phylogenetic $X$-network $N$, count the number $\beta(N)$ ($\geq 0$) of base trees of $N$.
\end{problem}

Note that $\alpha(N)\geq \widehat{\alpha}(N) \geq \beta(N)$ holds by definition. 
For instructive purposes,  we now demonstrate the differences between them with two examples. 
Suppose $N_1$ is the network in Figure~\ref{fig:crossovers}. If this $N_1$ is input, then Algorithm~\ref{algm:count} returns  $\alpha(N_1)=2^2$; however, we can easily see that $\widehat{\alpha}(N_1)=\beta(N_1)=1$ holds as $N_1$ virtually contains only one phylogenetic tree on $X=\{x_1, x_2\}$. Next, we assume that $N_2$ is the network shown in Figure~\ref{fig:non-isomorphic}. Then, we  have $\alpha(N_2)=6$ by  Algorithm~\ref{algm:count}. By examining each element of the set $\mathcal{T}$ of subdivision trees of $N_2$ that is produced by Algorithm~\ref{algm:listing}, we see that the elements of $\mathcal{T}$ are are all distinct, so $\widehat{\alpha}(N_2)=6$ holds. However,  $N_2$ has exactly two subdivision trees that are embeddings of the same rooted binary phylogenetic tree (highlighted in bold in Figure~\ref{fig:non-isomorphic}), and thus $\beta(N_2)=5$ holds.

Besides the complexity of the above two problems, it would be also meaningful to study the relationship between $\alpha(N)$, $\widehat{\alpha}(N)$, and $\beta(N)$ towards the development of useful criteria for analyzing the  complexity of phylogenetic networks.   For example, Francis and Moulton \cite{FM2018} obtained a result meaning that $\alpha(N)=\widehat{\alpha}(N)$ holds if $N$ is a tree-child network (\textit{i.e.}, a tree-based phylogenetic network with the special property such that each non-leaf vertex of N has at least one child that is not a reticulation vertex) (Theorem~3.3 in \cite{FM2018}), but  we note that the converse does not hold. In fact, the subdivision trees of the network in Figure~\ref{fig:non-isomorphic} are all distinct although it is not tree-child. Then, what is a necessary and sufficient condition for $\alpha(N)=\widehat{\alpha}(N)$, and what about  $\widehat{\alpha}(N)=\beta(N)$?

\begin{figure}[htbp]
\centering
\includegraphics[scale=.6]{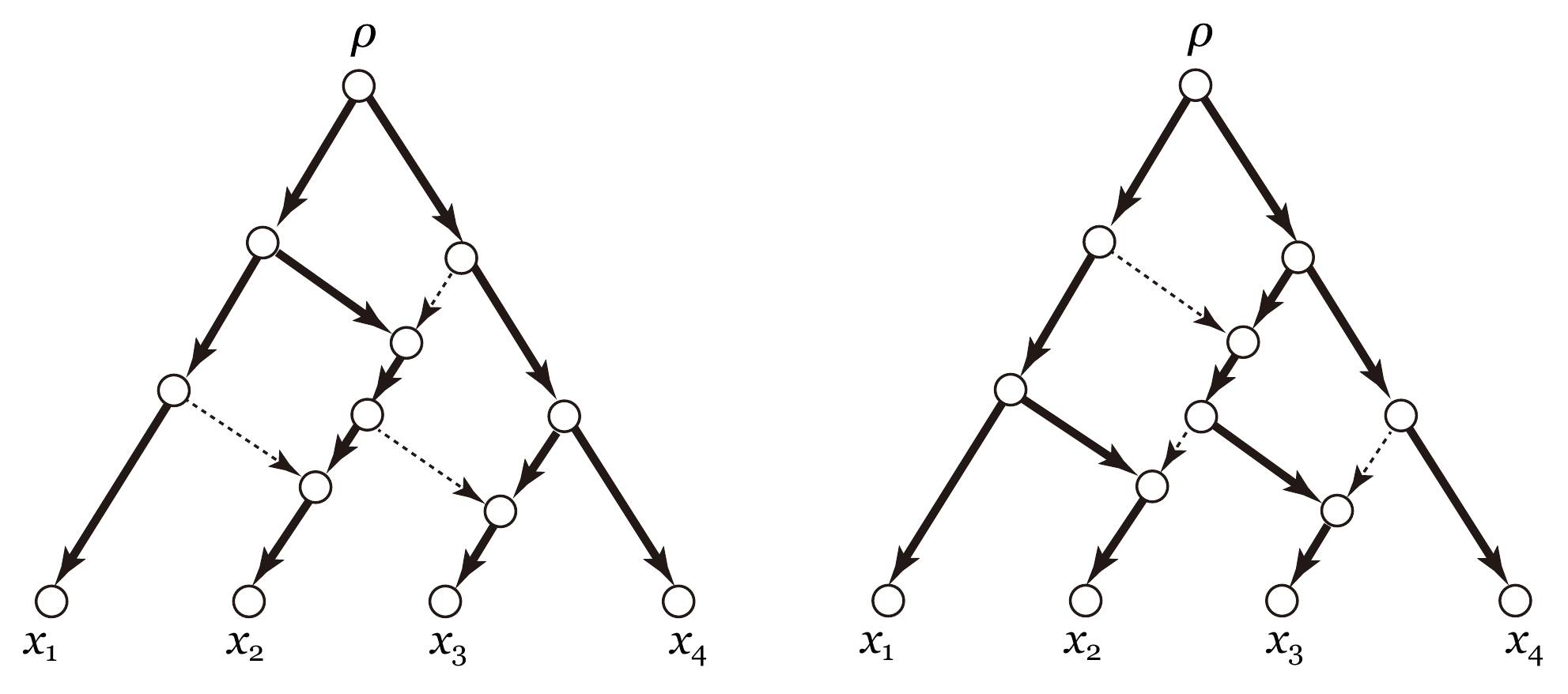}
\caption{An illustration of two non-isomorphic subdivision trees that are embeddings of the same phylogenetic tree, which demonstrates the difference between Problem~\ref{prob:non-isomorphic.subdivision.tree} and Problem~\ref{prob:basetree} (see Subsection~\ref{subsec:direction1} for more detail).
\label{fig:non-isomorphic}}
\end{figure}

\subsection{Structure of unrooted phylogenetic networks}
The notion of unrooted (undirected) tree-based phylogenetic networks were originally defined in \cite{unrootedTBN2018} and has been discussed in several studies in recent years (\textit{e.g.}, \cite{FF2020, fischer2018non, mathbio2018}). However, in contrast to the rooted case, there are many computational difficulties in this direction even in binary settings. For example, given an unrooted binary phylogenetic network $N$, the problem of deciding whether or not $N$ is tree-based  is NP-complete (Theorem~2 in \cite{unrootedTBN2018}). This implies that, as Fischer and Francis \cite{FF2020} pointed out, all of the indices for measuring deviations from being tree-based proposed in \cite{FF2020} are NP-hard to compute.
Considering that phylogenetic networks reconstructed using distance data are  necessarily unrooted, it would be important to explore subclasses of unrooted phylogenetic networks with a mathematically nice structure in order to develop new and useful methods for analyzing biological data.

\bibliographystyle{amsplain}

\bibliography{references.bib}

\section*{Acknowledgment}
This study was supported by JST PRESTO Grant Numbers JPMJPR16EB and JPMJPR1929. The author thanks the organizers of the Portobello 2018 Conference (The Interface of Mathematics and Biology, The 22nd Annual New Zealand Phylogenomics Meeting) where she announced most results in this paper in her talk \cite{portobello2018}.
The author is also grateful to the anonymous reviewers for their quality comments that have greatly improved the readability of this paper, to Kazuhisa Makino for suggesting Problem~\ref{prob:optimization} and Section~\ref{sec:non-binary} and for many other helpful comments, to Mike Steel for some editorial suggestions and for useful discussion on Corollary~\ref{thm:deviation.equals.W-fences}, and to Andrew Francis, Leo van Iersel, and Louxin Zhang for providing information on relevant references.

\end{document}